\def\acts{\ \rotatebox[origin=c]{-90}{$\circlearrowright$}\ }
\def\racts{\ \rotatebox[origin=c]{90}{$\circlearrowleft$}\ }
\def\dacts{\ \rotatebox[origin=c]{0}{$\circlearrowright$}\ }
\def\uacts{\ \rotatebox[origin=c]{180}{$\circlearrowright$}\ }
\newtheorem{thm}{Theorem}[section]
\newtheorem{lem}[thm]{Lemma}
\newtheorem{conj}[thm]{Conjecture}
\newtheorem{claim}[thm]{Claim}
\newtheorem{prop}[thm]{Proposition}
\theoremstyle{definition}
\newtheorem{defn}[thm]{Definition}
\newtheorem{rmk}[thm]{Remark}
\newtheorem*{ack}{Acknowledgements}
\numberwithin{equation}{section}
\def\C{{\mathbb C}}
\def\Q{{\mathbb Q}}
\def\R{{\mathbb R}}
\def\Z{{\mathbb Z}}
\def\P{{\mathbb P}}
\def\QQ{\overline{\mathbb Q}}
\DeclareMathOperator{\Div}{Div}
\DeclareMathOperator{\Pic}{Pic}
\DeclareMathOperator{\id}{id}
\DeclareMathOperator{\End}{End}
\DeclareMathOperator{\Supp}{Supp}
\DeclareMathOperator{\Nef}{Nef}
\DeclareMathOperator{\Eff}{Eff}
\DeclareMathOperator{\ord}{ord}
\newenvironment{claimproof}[0]
  {%
   \paragraph{\it Proof.}%
  }
  {%
    \hfill$\blacksquare$%
  }
\renewcommand{\labelenumi}{\rm(\arabic{enumi})}
\title[]
{Kawaguchi-Silverman conjecture for endomorphisms on rationally connected varieties admitting an int-amplified endomorphism}
\author{Yohsuke Matsuzawa}
\author{Shou Yoshikawa}
\address{Graduate school of Mathematical Sciences, the University of Tokyo, Komaba, Tokyo,
153-8914, Japan}
\address{Graduate school of Mathematical Sciences, the University of Tokyo, Komaba, Tokyo,
153-8914, Japan}
\email{myohsuke@ms.u-tokyo.ac.jp}
\email{yoshikaw@ms.u-tokyo.ac.jp}
\begin{document}

\begin{abstract}
We prove Kawaguchi-Silverman conjecture for all surjective endomorphisms on every smooth rationally connected variety
admitting an int-amplified endomorphism.
\end{abstract}

\maketitle

\setcounter{tocdepth}{1}
\tableofcontents

\section{Introduction}

Let $X$ be a smooth projective variety and $f \colon X \dashrightarrow X$ a dominant rational self-map,
both defined over $ \overline{\mathbb Q}$.
Silverman introduced the notion of arithmetic degree in \cite{sil},
which measures the arithmetic complexity of $f$-orbits by means of Weil height functions.

On  the other hand, we can attach the dynamical degree $\delta_{f}$ to $f$, which  measures the geometric complexity of
the dynamical system.
In \cite{sil}, \cite[Conjecture 6]{ks3} Kawaguchi and Silverman conjectured that the arithmetic degree of any Zariski dense orbits
are equal to the first dynamical degree $\delta_{f}$ (cf.  Conjecture \ref{ksc}). 

If the Kodaira dimension of $X$ is positive, $f$ does not have Zariski dense orbits.
Therefore, the conjecture is interesting only for varieties of non-positive Kodaria dimension.
Smooth projective varieties of Kodaira dimension zero are isomorphic up to \'etale covering to products of abelian varieties, hyper-K\"ahler varieties, and Calabi-Yau varieties.
The conjecture is solved for abelian varieties \cite{ks1, sil2}, endomorphisms on hyper-K\"ahler varieties \cite{ls}.  
As for endomorphisms on varieties of Kodaira dimension zero, the essential remaining case is automorphisms on Calabi-Yau varieties.
On the other hand, smooth projective varieties of Kodaira dimension $-\infty$ are fibered (by rational maps) over non-uniruled varieties
(so non-negative Kodaira dimension conjecturely) with general fiber rationally connected. 
Thus, it is natural to tackle the conjecture for rationally connected varieties as a first step. 

One strategy to prove the conjecture for higher dimensional algebraic varieties is to use minimal model program (MMP) and reduce the problem 
to a problem on relatively easier varieties.
As for non-invertible endomorphisms on smooth projective surfaces, the conjecture is proved by using this strategy in \cite{mss}.
Such strategy also works for endomorphisms on higher dimensional projective varieties
if we further assume that the varieties admit int-amplified endomorphisms.
(A self-morphism $f \colon X \longrightarrow X$ of a normal projective variety $X$ is called int-amplified if there exists an ample divisor $H$ on $X$ such
that $f^{*}H-H$ is ample, cf. Definition \ref{def:intamp}. 
For varieties admitting int-amplified endomorphisms, a theory of MMP equivariant under endomorphisms is developed by Meng and Zhang, cf. \cite{meng-zhang, meng-zhang2, meng}.)
The main theorem of this paper is the following.
\begin{thm}\label{kscRC}
Let $X$ be a smooth projective rationally connected variety over $\QQ$ admitting an int-amplified endomorphism.
Then Kawaguchi-Silverman conjecture holds for all surjective endomorphisms on $X$, i.e.
the arithmetic degree $ \alpha_{f}(x)$ is equal to the dynamical degree $ \delta_{f}$ for $x\in X(\QQ)$ with Zariski dense $f$-orbit.
\end{thm}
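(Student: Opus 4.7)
My plan is to proceed by induction on $\dim X$, using Meng--Zhang's equivariant minimal model program for varieties admitting an int-amplified endomorphism to reduce, step by step, either to a lower-dimensional problem via a Mori fiber space (handled by induction) or to a Fano variety with very rigid structure.

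The first step is to replace $f$ by a suitable iterate, which loses nothing since $\alpha_{f^k}(x) = \alpha_f(x)^k$ and $\delta_{f^k} = \delta_f^k$. After this replacement, Meng--Zhang's theory provides an $f$-equivariant MMP
\[
X = X_0 \dashrightarrow X_1 \dashrightarrow \cdots \dashrightarrow X_n,
\]
whose steps are divisorial contractions and flips and which terminates at a Mori fiber space $\phi\colon X_n \to Y$. Each $X_i$ remains $\Q$-factorial, rationally connected (a birational invariant of smooth projective varieties), and still admits an int-amplified endomorphism. Because arithmetic and dynamical degrees behave compatibly under equivariant divisorial contractions and small modifications, I would argue that KSC for the induced $f_n$ on $X_n$ implies KSC for $f$ on $X$; this reduces the problem to the Mori fiber space $\phi$.

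The second step is to exploit the $f_n$-equivariance of $\phi$: the endomorphism $f_n$ descends to a surjective endomorphism $g\colon Y \to Y$, where $Y$ is rationally connected, of strictly smaller dimension, and still admits an int-amplified endomorphism. The inductive hypothesis then yields KSC for $g$. Via the standard fibration-type comparisons expressing arithmetic and dynamical degrees of $f_n$ in terms of those of $g$ and of the iterates of $f_n$ restricted to a $g$-periodic general fiber, KSC for $f_n$ follows once one establishes KSC on a general fiber, which is a smooth Fano variety. The induction thereby closes provided the Fano base case is handled.

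The main obstacle will be exactly this Fano base case: when $Y$ is a point, $X_n$ itself is a smooth Fano variety admitting an int-amplified endomorphism, and one needs KSC for every surjective endomorphism of such $X_n$. When the Picard rank is one, every surjective endomorphism is polarized and KSC is immediate; the delicate situation is higher Picard rank. Here I would exploit the strong structural constraints that an int-amplified endomorphism imposes on a Fano variety, in particular the rational polyhedrality and invariance, up to iterate, of the extremal rays of the nef and pseudoeffective cones, combined with the inductive hypothesis applied to further equivariant Mori contractions of $X_n$ itself and to the rationally connected quotients they produce. I expect the technical core of the paper to lie in carrying out this Fano step uniformly, possibly by further reducing to classes (such as toric or toric-like Fano varieties) for which KSC is already available in the literature.
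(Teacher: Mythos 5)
Your proposal correctly identifies Meng--Zhang equivariant MMP as the main tool, but there are two genuine gaps, and the second one is exactly where all of the paper's technical work lives.

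First, a minor point: when the $K_X$-MMP terminates at a Mori fiber space $\phi\colon X_n \to Y$ with $Y$ a point, $\rho(X_n)=1$ automatically, since $\phi$ is an extremal-ray contraction. So the ``higher Picard rank Fano'' case you flag as delicate does not occur as the output of the MMP; the Fano base case is always immediate.

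The serious gap is the assertion that ``via the standard fibration-type comparisons \ldots KSC for $f_n$ follows once one establishes KSC on a general fiber.'' No such comparison is available. When $\delta_{f_n}=\delta_g$, Remark \ref{baseksc} lets you descend to $Y$; but when $\delta_{f_n}>\delta_g$ (the fiber direction dominates), knowing KSC for $g$ gives no control over $\alpha_{f_n}(x)$, and knowing KSC on a periodic fiber also does not help because a Zariski-dense orbit on $X_n$ need not accumulate arithmetically on any single fiber. This is precisely why the paper invokes the trichotomy of Proposition \ref{3alt}: the genuinely hard alternative is case (\ref{case:k=0}), where $\delta_{f_n}>\delta_g$, $\kappa(-K_{X_n})=0$, and the anti-canonical representative has irreducible support. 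The paper's resolution is not a fibration comparison but an exclusion argument: in that case Theorem \ref{covlem} and Lemma \ref{liftlem} produce a quasi-\'etale cover of $X$ dominating a positive-dimensional abelian variety, contradicting the simple-connectedness of a smooth rationally connected variety. Your proposal has no mechanism to rule out this bad case, and without it the induction does not close. The remaining case (\ref{case:mmp}) is handled by rerunning MMP with an auxiliary boundary to either drop the Picard number or equalize the dynamical degrees of total space and base, and iterating; this secondary loop is also absent from your sketch but is a comparatively routine bookkeeping step once the covering theorem is in place.
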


\begin{rmk}
Meng and Zhang prove this theorem for threefolds in \cite{mz}.
\end{rmk}

The organization of this paper is as follows:
In \S \ref{sec:def}, we give definitions of dynamical and arithmetic degrees and summarize basic properties of them without proof.
We also introduce Kawaguchi-Silverman conjecture in this section.

The proof of the main theorem involves several techniques from birational geometry.
We summarize the statement of equivariant MMP in \S \ref{sec:eqmmp}.
A main difficulty of the proof is that certain kind of fibrations make it impossible to conclude
Kawaguchi-Silverman conjecture from that of the output of MMP.
We actually prove that such fibrations do not appear in our setting.
\S \ref{sec:covthm} is for the purpose of this.
In \S \ref{sec:prf}, we give the proof of the main theorem.

\vspace{15pt}
{\bf Notation and Terminology.}
\begin{itemize}
\item Throughout this paper, the ground field is $\QQ$ unless otherwise stated.
All statements that are purely geometric hold over any algebraically closed field of characteristic zero.
\item A variety over a filed $k$ is a geometrically integral separated scheme of finite type over $k$.
A divisor on a variety means a Cartier divisor.
A $\Q$-Cartier divisor (resp. $\R$-Cartier divisor) on a variety $X$ is an element of $(\Div X) {\otimes}_{\Z}\Q$ (resp. $(\Div X) {\otimes}_{\Z}\R$)
where $\Div X$ is the group of Cartier divisors.
When $X$ is normal, these groups are embedded in $Z^{1}(X) {\otimes}_{\Z}\Q$ (resp. $Z^{1}(X) {\otimes}_{\Z}\R$)
where $Z^{1}(X)$ is the group of codimension one cycles on $X$.
An element of $Z^{1}(X) {\otimes}_{\Z}\Q$ (resp.  $Z^{1}(X) {\otimes}_{\Z}\R$) is called $\Q$-Weil divisor (resp. $\R$-Weil divisor).
Linear equivalence and $\Q$-linear equivalence are denoted by $\sim$ and $\sim_{\Q}$ respectively. For $\Q$-Weil divisors $D$ and $E$, $D \sim E$ means $D-E$ is a principal divisor.
\item Let $X$ be a projective variety over an algebraically closed field of characteristic zero.
\begin{itemize}
\item $N^{1}(X)$ is the group of Cartier divisors modulo numerical equivalence
(a Cartier divisor $D$ is numerically equivalent to zero, which is denoted by $D\equiv0$, if $(D\cdot C)=0$ for all irreducible curves $C$ on $X$).
\item $N_{1}(X)$ is the group of $1$-cycles modulo numerical equivalence
(a $1$-cycle $ \alpha$ is numerically zero if $(D\cdot \alpha)=0$ for all Cartier divisors $D$).
By definition, $N^{1}(X)$ and $N_{1}(X)$ are dual to each other.
\item When $X$ is normal, the Iitaka dimension of a $\Q$-Cartier divisor $D$ on $X$ is denoted by $\kappa(D)$. 
\end{itemize}
\item A morphism $f \colon X \longrightarrow X$ from a projective variety $X$ to itself is called self-morphism of $X$ or endomorphism on $X$.
If it is surjective, then it is a finite morphism.
\item A morphism $f \colon X \longrightarrow Y$  between varieties is called an algebraic fiber space if $f$ is proper surjective and $f_{*} \mathcal{O}_{X}= \mathcal{O}_{Y}$.
\item A morphism $f \colon X \longrightarrow Y$  between varieties is called quasi-\'etale if $f$ is \'etale at every codimension one point of $X$.
\item Let $f \colon X \longrightarrow Y$ be a finite surjective morphism between normal varieties. The ramification divisor of $f$ is denoted by $R_{f}$.
\item The Picard number of a projective variety $X$ is denoted by $\rho(X)$.
\item The function field of a variety $X$ is denoted by $k(X)$.
\item Let $f \colon X \longrightarrow X$ be a self-morphism of a variety $X$.  A subset $S\subset X$ is called totally invariant under $f$ if $f^{-1}(S)=S$ as sets.
\item Consider the commutative diagram of the following form:
\[
\xymatrix{
X \ar@{-->}[r]^{\pi} \ar[d]_{f} & Y \ar[d]^{g} \\
X \ar@{-->}[r]_{\pi} & Y
}
\]
where $f, g$ are surjective morphisms and $\pi$ is a dominant rational map.
We write this diagram as 
\[
\xymatrix{
f \acts X \ar@{-->}[r]^{\pi}& Y \racts g.
}
\]
We say a commutative diagram is equivariant if each object is equipped with an endomorphism and the morphisms are equivariant with respect to these 
endomorphisms.
\item Let $(X, \Delta)$ be a klt pair where $X$ is a  normal projective variety. A $(K_{X}+ \Delta)$-negative extremal ray contraction $\pi \colon X  \longrightarrow Y$
is called of fiber type or a $(K_{X}+ \Delta)$-Mori fiber space if $\dim Y< \dim X$. We say simply Mori fiber space if $ \Delta=0$.
\item Let $M$ be a $\Z$-module. We write $M_{\Q}=M {\otimes}_{\Z}\Q$, $M_{\R}=M {\otimes}_{\Z}\R$, and so on.
\end{itemize}

\begin{ack}
The authors would like to thank the organizers of ``Younger generations in Algebraic and Complex geometry VI'' where
this collaboration started.
The first author would like to thank Sheng Meng and De-Qi Zhang for stimulating discussions.
The first author is supported by JSPS Research Fellowship for Young Scientists and KAKENHI Grant Number 18J11260.
The second author is supported by  the Program for Leading Graduate Schools, MEXT, Japan.
\end{ack}

\section{Arithmetic degree and Kawaguchi-Silverman  Conjectures}\label{sec:def}

\subsection{Dynamical degrees}

Let $X$ be a smooth projective variety defined over an algebraically closed field of characteristic zero
and $f \colon X \dashrightarrow X$ a  dominant rational map.
We define pull-back $f^{*} \colon N^{1}(X) \longrightarrow N^{1}(X)$ as follows.
Take a resolution of indeterminacy $\pi \colon X' \longrightarrow X$ of $f$ with $X'$ smooth projective.
Write $f'=f \circ \pi $.
Then we define $f^{*}=\pi_{*}\circ {f'}^{*}$.
This is independent of the choice of resolution.

\begin{defn}

Fix a norm $||\ ||$ on the finite dimensional real vector space $\End(N^{1}(X)_{\R})$.
Then the (first) dynamical degree of $f$ is 
\[
\delta_{f}=\lim_{n \to \infty}||(f^{n})^{*}||^{1/n}.
\]
This is independent of the choice of $||\ ||$.
We refer to \cite{dang, df, tru0}, \cite[\S 4]{ds} for basic properties of dynamical degrees.
\end{defn}

\begin{rmk}
The dynamical degree has another equivalent definition in terms of intersection numbers:
\[
\delta_{f}=\lim_{n \to \infty} ((f^{n})^{*}H\cdot H^{\dim X-1})^{1/n}
\]
where $H$ is any nef and big Cartier divisor on $X$ (cf. \cite{dang}).
\end{rmk}

\begin{rmk}\label{rmk on dyn deg}\ 
Dynamical degree is a birational invariant. That is, if $\pi \colon X \dashrightarrow X'$ is a birational map
and $f \colon X \dashrightarrow X$ and $f' \colon X' \dashrightarrow X'$ are conjugate by $\pi$, then $\delta_{f}=\delta_{f'}$.
In particular, we can define the dynamical degree of a self-dominant rational map of quasi-projective varieties by taking
a smooth projective model.
\end{rmk}

\begin{rmk}
If $X$ is a normal projective variety and $f \colon X \longrightarrow X$ is a surjective endomorphism, 
then $ \delta_{f}$ is equal to the spectral radius of $f^{*} \colon N^{1}(X) \longrightarrow N^{1}(X) $ (cf.  \cite[Lemma 3.1]{ma2}). 
By projection formula, $\delta_{f}$ is also equal to the spectral radius of $f_{*} \colon N_{1}(X) \longrightarrow N_{1}(X)$.
\end{rmk}

\subsection{Arithmetic degrees}

In this subsection, the ground field is $ \overline{\mathbb Q}$.
We briefly recall the definition of Weil height function.
Standard references for Weil height functions are \cite{bg,hs,Lan}, for example.

The height function on a projective space $\P^{N}(\QQ)$ is
\begin{align*}
\P^{N}(\QQ) \longrightarrow \R\ ;\  (x_{0}:\cdots:x_{N}) \mapsto \frac{1}{[K : \Q]}\sum_{v} \log \max\{|x_{0}|_{v}, \dots , |x_{N}|_{v}\}
\end{align*}
where $K$ is a number field (finite extension of $\Q$ contained in the fixed algebraic closure $\QQ$) containing the coordinates $x_{0},\dots,x_{N}$,
the sum runs over all places $v$ of $K$, and $|\ |_{v}$ is the absolute value associated with $v$ normalized as follows:
\begin{align*}
|x|_{v}= 
\begin{cases}
\#\ ( \mathcal{O}_{K}/  \mathfrak{p}_{v})^{-\ord_{v}(x)} 
\ \text{if $v$ is non-archimedian}\\
| \sigma_{v}(x)|^{[K_{v}:\R]}\ \text{if $v$ is archimedian.}
\end{cases}
\end{align*}
Here $ \mathcal{O}_{K}$ is the ring of integers of $K$.
When $v$ is non-archimedian, $ \mathfrak{p}_{v}$ is the maximal ideal corresponding to $v$ and $\ord_{v}$ is the valuation associated with $v$.
When $v$ is archimedian, $ \sigma_{v}$ is the embedding of $K$ into $\C$ corresponding to $v$.
This definition is independent of the choice of homogeneous coordinates and the number field $K$.

Let $X$ be a projective variety over $\QQ$.
An $\R$-Cartier divisor  $D$ on $X$ determines a (logarithmic) Weil height function $h_{D}$ up to bounded functions as follows.
When $D$ is a very ample integral divisor, $h_{D}$ is the composite of the embedding by the complete linear system
$|D|$ and the height on the projective space we have just defined.
For a general $D$, we write 

\begin{align}
\label{very ample sum}
D=\sum_{i=1}^{m}a_{i}H_{i} 
\end{align}
where $a_{i}$ are real numbers and $H_{i}$ are very ample divisors.
Then we define
\[
h_{D}=\sum_{i=1}^{m}a_{i}h_{H_{i}}.
\]
The function $h_{D}$ does not depend on the choice of the representation (\ref{very ample sum})
up to bounded function.
We call any representative of the class $h_{D}$ modulo bounded functions a height function
associated with $D$.

\begin{defn}
Let $X$ be a normal projective variety defined over $\QQ$.
Let $f \colon X \longrightarrow X$ be a surjective endomorphism. 
Let $H$ be an ample divisor on $X$. Fix a Weil height function $h_{H}$ associated with $H$.
The arithmetic degree $ \alpha_{f}(x)$ of $f$ at $x \in X(\QQ)$ is defined by
\[
\alpha_{f}(x)=\lim_{n \to \infty}\max\{1, h_{H}(f^{n}(x))\}^{1/n}.
\]
The definition of the arithmetic degree is independent of the choice of $H$ and $h_{H}$ (\cite[Proposition 12]{ks3} \cite[Theorem 3.4]{mss}). 
The existence of the limit is proved in \cite{ks1}.
\end{defn}

\begin{rmk}\label{rmk:exist-of-ad}
In \cite{ks1}, it is proved that $ \alpha_{f}(x)$ is equal to the absolute value of one of the eigenvalues of $f^{*} \colon N^{1}(X) \longrightarrow N^{1}(X)$ (\cite{ks1}).
In particular, $ \alpha_{f}(x) \leq \delta_{f}$ for all $x\in X(\QQ)$.
\end{rmk}

In \cite{ks3}, Kawaguchi and Silverman formulated the following conjecture.

\begin{conj}[KSC]\label{ksc}
Let $X$ be a normal projective variety and $f \colon X \longrightarrow X$ a surjective morphism,
both defined over $ \overline{\mathbb Q}$.
Let $x \in X( \overline{\mathbb Q})$.
If the orbit $ O_{f}(x)=\{ f^{n}(x) \mid n=0,1,2, \dots \}$ is Zariski dense in $X$,
then $\alpha_{f}(x)=\delta_{f}$.
\end{conj}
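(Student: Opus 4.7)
The plan is to prove the theorem by induction on $\dim X$, using the equivariant minimal model program for varieties admitting int-amplified endomorphisms developed by Meng-Zhang and recalled in Section~\ref{sec:eqmmp}. Given a surjective endomorphism $f \colon X \longrightarrow X$, after replacing $f$ by a suitable iterate we may run an $f$-equivariant MMP on $X$, obtaining a sequence
\[
X = X_{0} \dashrightarrow X_{1} \dashrightarrow \cdots \dashrightarrow X_{n}
\]
of equivariant divisorial contractions and flips, followed by an equivariant Mori fiber space $\pi \colon X_{n} \longrightarrow Y$ with $\dim Y < \dim X_{n}$, where each intermediate $X_{i}$ remains rationally connected and retains an int-amplified endomorphism, and so does $Y$. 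Since the first dynamical degree is a birational invariant (Remark~\ref{rmk on dyn deg}) and since heights transform controllably through the equivariant birational steps, KSC for $(X_{n}, f_{X_{n}})$ implies KSC for $(X, f)$; it therefore suffices to prove KSC at the final Mori fiber space.

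Fix $x \in X_{n}(\QQ)$ with Zariski dense $f_{X_{n}}$-orbit. Then $\pi(x)$ has Zariski dense $f_{Y}$-orbit in $Y$, for otherwise $O_{f_{Y}}(\pi(x))$ would lie in a proper closed subset $Z \subsetneq Y$ and $O_{f_{X_{n}}}(x)$ would be contained in $\pi^{-1}(Z) \subsetneq X_{n}$, violating density. When $\dim Y > 0$ we apply the induction hypothesis to obtain $\alpha_{f_{Y}}(\pi(x)) = \delta_{f_{Y}}$; combining the functorial inequality $\alpha_{f_{X_{n}}}(x) \geq \alpha_{f_{Y}}(\pi(x))$ with the upper bound $\alpha_{f_{X_{n}}}(x) \leq \delta_{f_{X_{n}}}$ from Remark~\ref{rmk:exist-of-ad} yields KSC for $f_{X_{n}}$ as soon as
\[
\delta_{f_{X_{n}}} = \delta_{f_{Y}}.
\]
The base case $\dim Y = 0$ is immediate: then $X_{n}$ has Picard number one, $f_{X_{n}}^{*}$ acts on $N^{1}(X_{n})_{\R} \cong \R$ as multiplication by $\delta_{f_{X_{n}}}$, and KSC follows directly from Remark~\ref{rmk:exist-of-ad}.

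The main obstacle is precisely the equality $\delta_{f_{X_{n}}} = \delta_{f_{Y}}$. For a general equivariant Mori fiber space one only has $\delta_{f_{X_{n}}} = \max\bigl(\delta_{f_{Y}}, \delta_{f_{X_{n}}|F}\bigr)$, where $F$ is a general fiber, and when the relative (fiberwise) dynamical degree strictly dominates, KSC on the base does not imply KSC on the total space. This is the \emph{bad fibration} phenomenon alluded to in the introduction. The heart of the proof, developed in Section~\ref{sec:covthm}, is to exclude such fibrations in our setting: one must show that every equivariant Mori fiber space produced by the equivariant MMP on a rationally connected variety admitting an int-amplified endomorphism automatically satisfies $\delta_{f_{X_{n}}} = \delta_{f_{Y}}$. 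The expected ingredients are a spectral analysis of $f_{X_{n}}^{*}$ on $N^{1}(X_{n})_{\R}$ relative to the subspace $\pi^{*}N^{1}(Y)_{\R}$, a description of the general fibers $F$ (Fano of Picard number one, inheriting polarized endomorphisms forced by the int-amplified structure on $X_{n}$), and the rigidity coming from the fact that an int-amplified endomorphism persists through every step of the equivariant MMP and so strongly constrains the eigenvalues that can appear along the fibers.
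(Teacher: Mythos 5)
Your proposal starts from the right general framework (equivariant MMP, reduction to a Mori fiber space $\pi\colon X_{n}\to Y$), but the central claim you pivot on—that one should prove $\delta_{f_{X_{n}}}=\delta_{f_{Y}}$ for every equivariant Mori fiber space arising in this setting—is not true and is not what the paper proves. There really are Mori fiber spaces in this setting with $\delta_{f_{X_n}}>\delta_{f_Y}$ and $\dim Y>0$; KSC still holds for them, but not by comparison with the base. The paper's key step (Proposition~\ref{3alt}) is a trichotomy, not an equality of degrees. When $\delta_f>\delta_g$, one produces a nef integral eigendivisor $D$ with $f^*D\sim_{\Q}\delta_f D$ spanning an extremal ray of $\overline{\Eff}(X_n)$, and then branches according to whether $D+aK_{X_n}$ is pseudo-effective. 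In the pseudo-effective branch one gets $D\sim_\Q -K_{X_n}$ (up to scaling): if $\kappa(-K_{X_n})>0$, KSC follows directly from the canonical-height criterion (Proposition~\ref{canht-posiitaka}), with no reference to the base at all; if $\kappa(-K_{X_n})=0$, one does \emph{not} fix the fibration—one derives a \emph{contradiction}. In the non-pseudo-effective branch one reruns a $(K_{X_n}+\Delta)$-MMP and tracks Picard number/dimension. Your proposal collapses all of these into the single false goal $\delta_{f_{X_n}}=\delta_{f_Y}$ and thereby misses two essential ingredients: the canonical-height argument when the anti-canonical has positive Iitaka dimension, and the restart-the-MMP mechanism when $B$ is not pseudo-effective.

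You also mischaracterize what Section~\ref{sec:covthm} accomplishes. The Covering Theorem (Theorem~\ref{covlem}) is not a spectral statement about $f^*$ on $N^1$; it constructs, under the hypothesis that every component of $\Supp R_f$ is $\pi$-horizontal, a quasi-\'etale cover $\widetilde{X}\to X_n$ fibered over an abelian variety, and Lemma~\ref{liftlem} shows this structure ascends the equivariant MMP back to $X$. The decisive point is that the horizontality hypothesis is verified precisely in the case $\kappa(-K_{X_n})=0$: there, $R_{\Phi}$ is irreducible and $\Q$-linearly equivalent to a multiple of $-K_{X_n}$, which is $\pi$-ample (Lemma~\ref{-Ktotinv}, Remark~\ref{ramtot}), hence horizontal. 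Lifting the resulting $(\ast)$-structure to $X$ then contradicts the fact that a smooth rationally connected variety cannot admit a quasi-\'etale cover dominating a positive-dimensional abelian variety. So the covering machinery is used to rule out one arm of the trichotomy by contradiction, not to establish equality of dynamical degrees. Finally, your induction on $\dim X$ applied to the base $Y$ runs into the problem that $Y$ is only $\Q$-factorial klt, not smooth, so the inductive hypothesis (stated for smooth rationally connected varieties) does not directly apply; the paper instead iterates via Proposition~\ref{3alt} and terminates by a joint descent on Picard number and dimension.
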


\begin{rmk}
In \cite{ks3}, the conjecture is actually formulated for dominant rational self-maps of  smooth projective varieties.
\end{rmk}

\begin{rmk}\label{iterksc}
Let $f \colon X \longrightarrow X$ be as in Conjecture \ref{ksc} and let $x \in X(\QQ)$.
Then, $\alpha_{f^{n}}(x)= \alpha_{f}(x)^{n}$ for any positive integer $n$ (\cite[Corollary 3.4]{ma}).
On the other hand, $\delta_{f^{n}}=\delta_{f}^{n}$ by definition.
Thus, we may replace $f$ with its power to prove Conjecture \ref{ksc}.
(Note that $O_{f}(x)$ is Zariski dense if and only if so is $O_{f^{n}}(x)$.)
\end{rmk}

\begin{rmk}\label{birinvksc}
Suppose $f \colon X \longrightarrow X$ and $g \colon Y \longrightarrow Y$ are surjective endomorphisms on normal projective varieties
and $\pi \colon X \dashrightarrow Y$ is a birational map such that $g\circ \pi = \pi \circ f$.
Then KSC for $f$ is equivalent to that of $g$ (cf. \cite[Lemma 5.6]{ma2}).
\end{rmk}

\begin{rmk}\label{baseksc}
Suppose $f \colon X \longrightarrow X$ and $g \colon Y \longrightarrow Y$ are surjective endomorphisms on normal projective varieties
and $\pi \colon X \dashrightarrow Y$ is a surjective morphism such that $g\circ \pi = \pi \circ f$.
Suppose furthermore that $ \delta_{f}= \delta_{g}$.
If KSC for $g$ holds, then KSC for $f$ also holds. 
Indeed, it is easy to see that $ \alpha_{f}(x)\geq \alpha_{g}(\pi(x))$ for every $x\in X(\QQ)$.
If $x\in X(\QQ)$ has Zariski dense $f$-orbit, then $\pi(x)$ has Zariski dense $g$-orbit and we get 
$ \alpha_{f}(x)\geq \alpha_{g}(\pi(x))= \delta_{g} = \delta_{f}$.
This implies $ \alpha_{f}(x)= \delta_{f}$.
\end{rmk}

\begin{rmk}
Conjecture \ref{ksc} is verified in several cases (not only for endomorphisms, but also for dominant rational maps).
See \cite[Remark 1.8]{mss},  \cite{ms, ls, ma2, mz, lin, jonwul}.
\end{rmk}

We use the following later.

\begin{prop}[{\cite[Proposition 3.6]{ma2}}]\label{canht-posiitaka}
Let $X$ be a normal projective variety and $f \colon X \longrightarrow X$ a surjective endomorphism, both defined over $\QQ$.
Suppose there exists a $\Q$-Cartier divisor $D$ on $X$ such that
\begin{enumerate}
\item\label{cond:eigen} $f^{*}D \sim_{\Q} dD$ and $d=\delta_{f}>1$;
\item\label{cond:iitaka} $\kappa(D)>0$.
\end{enumerate}
Then we have $ \alpha_{f}(x)=\delta_{f}$ for every $x\in X(\QQ)$ with Zariski dense $f$-orbit.
\end{prop}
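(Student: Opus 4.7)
The plan is to construct the canonical height $\hat{h}_D$ associated with $D$ via Tate's limit and show that $\hat{h}_D(x) > 0$ whenever the $f$-orbit of $x$ is Zariski dense; the desired equality $\alpha_f(x) = \delta_f$ will then follow by comparing $h_D$ with a height of an ample divisor. Since $f^{*}D \sim_{\Q} dD$ gives $h_D \circ f = d\,h_D + O(1)$, the Tate limit
\[
\hat{h}_D(y) := \lim_{n \to \infty} \frac{h_D(f^n y)}{d^n}
\]
exists for all $y \in X(\QQ)$, satisfies $\hat{h}_D \circ f = d\,\hat{h}_D$, and differs from $h_D$ by a bounded function. Writing $D = A - B$ with $A, B$ ample (take $A = D + NH$ and $B = NH$ for any ample $H$ and $N \gg 0$), we have $h_A = h_D + h_B + O(1)$ with $h_B$ bounded below on $X(\QQ)$. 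Hence if $\hat{h}_D(x) > 0$, then $h_D(f^n x) \sim d^n \hat{h}_D(x)$ forces $h_A(f^n x)$ to grow like $d^n$, giving $\alpha_f(x) \geq d$. Combined with the universal bound $\alpha_f(x) \leq \delta_f = d$ (Remark \ref{rmk:exist-of-ad}), this yields $\alpha_f(x) = \delta_f$.

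The core of the proof is showing $\hat{h}_D(x) > 0$. The easy half, $\hat{h}_D(x) \geq 0$, follows by picking any effective $E \in |mD|$ (which exists since $\kappa(D) \geq 0$): as $h_E \geq -O(1)$ off $\Supp E$ and Zariski density forces $f^n x \notin \Supp E$ for infinitely many $n$, the Tate limit along those indices is bounded below by $0$. For strict positivity we genuinely use $\kappa(D) \geq 1$: choose $m$ with $h^0(X, mD) \geq 2$ and select linearly independent sections $s_1, s_2 \in H^0(X, mD)$, then form the nonconstant rational function $t = s_1/s_2 \in k(X)$ and the induced dominant rational map $t \colon X \dashrightarrow \P^1$. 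The pole divisor of $t$ is linearly equivalent to $mD - F$, where $F$ is the common part of $(s_1)_0$ and $(s_2)_0$ (sitting inside the fixed part of $|mD|$); so functoriality of heights together with $h_F \geq -O(1)$ off $\Supp F$ give
\[
h_{\P^1}(t(y)) \leq m\,h_D(y) + O(1) \quad \text{for } y \in X(\QQ) \setminus Z,
\]
where $Z \subsetneq X$ is an appropriate proper closed subset. If we had $\hat{h}_D(x) = 0$, then $\hat{h}_D(f^n x) = 0$ for every $n$ and hence $h_D(f^n x) = O(1)$; so $h_{\P^1}(t(f^n x)) = O(1)$ for the infinitely many $n$ with $f^n x \notin Z$. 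Northcott's theorem on $\P^1$ over a number field containing the fields of definition of $x$ and $f$ then forces $\{t(f^n x)\}$ to be a finite set $\{z_1, \ldots, z_k\}$, whence the orbit is contained in the proper closed subset $Z \cup \bigcup_{i} \overline{t^{-1}(z_i)}$, contradicting Zariski density.

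The main obstacle is the construction of the height comparison $h_{\P^1}(t(y)) \leq m\,h_D(y) + O(1)$: this requires tracking the pole divisor of $t$ through the fixed-part decomposition of $|mD|$ (handling any common components of $(s_1)_0$ and $(s_2)_0$) and ensuring that the resulting exceptional locus $Z$ remains a proper closed subset of $X$. The hypothesis $\kappa(D) > 0$, as opposed to merely $\kappa(D) \geq 0$, is precisely what supplies the nonconstant auxiliary map $t \colon X \dashrightarrow \P^1$ needed to invoke Northcott's theorem.
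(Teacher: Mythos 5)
Your proof is correct and follows essentially the same route as the cited \cite[Proposition 3.6]{ma2}: build the canonical height $\hat{h}_D$ by the Tate telescoping argument, obtain $\alpha_f(x)\geq d$ whenever $\hat{h}_D(x)>0$, and prove strict positivity on Zariski-dense orbits by combining the effectivity of $|mD|$ with a Northcott argument powered by $\kappa(D)\geq 1$. Passing to $\P^1$ via a two-dimensional pencil $\langle s_1,s_2\rangle\subset H^0(X,mD)$ rather than invoking the full Iitaka fibration is a valid simplification of the standard argument, not a genuinely different approach.
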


\section{Equivariant MMP}\label{sec:eqmmp}

Meng and Zhang established minimal model program equivariant with respect to endomorphisms, for varieties admitting an int-amplified endomorphism.
In this section, we summarize their results that we need later.
We refer to \cite{kolmor} for the basic notions in minimal model program.

\begin{defn}\label{def:intamp}
A surjective endomorphism $f \colon X \longrightarrow X$ of normal projective variety $X$ is called int-amplified if 
there exists an ample Cartier divisor $H$ on $X$ such that $f^{*}H-H$ is ample.
\end{defn}

We collect basic properties of int-amplified endomorphisms in the following lemma.

\begin{lem}\label{lem:intamp}\ 
\begin{enumerate}
\item Let $X$ be a normal projective variety, $f \colon X \longrightarrow X$ a surjective morphism, and $n>0$ a positive integer.
Then, $f$ is int-amplified if and only if so is $f^{n}$.

\item Let $\pi \colon X \longrightarrow Y$ be a surjective morphism between normal projective varieties.
Let $f \colon X \longrightarrow X$, $g \colon Y \longrightarrow Y$ be surjective endomorphisms such that $\pi \circ f=g \circ \pi$.
If $f$ is int-amplified, then so is $g$. 

\item  Let $\pi \colon X \dashrightarrow Y$ be a dominant rational map between normal projective varieties of same dimension.
Let $f \colon X \longrightarrow X$, $g \colon Y \longrightarrow Y$ be surjective endomorphisms such that $\pi \circ f=g \circ \pi$.
Then $f$ is int-amplified if and only if so is $g$.

\item If a normal $\Q$-Gorenstein (i.e. $K_{X}$ is $\Q$-Cartier) projective variety $X$ admits an int-amplified endomorphism, then the anti-canonical divisor 
$-K_{X}$ is numerically equivalent to an effective $\Q$-divisor.

\end{enumerate}
\end{lem}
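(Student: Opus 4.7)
My plan is to treat the four parts separately. Part (1) follows directly from the definition; parts (2) and (3) reduce to Meng's spectral characterization---that $f$ is int-amplified if and only if every eigenvalue of $f^* \colon N^1(X)_\R \to N^1(X)_\R$ has modulus strictly greater than one---which is the key external input. Part (4) is a ramification-plus-invertibility argument and is where the main obstacle lies.

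For (1), if $H$ is ample with $f^*H - H$ ample, the telescoping identity
\[
(f^n)^*H - H = \sum_{i=0}^{n-1}(f^i)^*(f^*H - H)
\]
writes $(f^n)^*H - H$ as a sum of pullbacks of an ample divisor by the finite surjective morphisms $f^i$, hence as a sum of ample divisors, hence ample. Conversely, if $(f^n)^*H - H$ is ample for some ample $H$, then $H' := \sum_{i=0}^{n-1}(f^i)^*H$ is ample and satisfies $f^*H' - H' = (f^n)^*H - H$.

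For (2), $\pi^* \colon N^1(Y)_\R \to N^1(X)_\R$ is injective (as $\pi$ is surjective, testing against curves in $X$ dominating curves in $Y$), and the intertwining $f^* \pi^* = \pi^* g^*$ makes $\pi^* N^1(Y)_\R$ an $f^*$-stable subspace on which $f^*$ is conjugate to $g^*$. Hence every eigenvalue of $g^*$ appears as an eigenvalue of $f^*$, and the spectral criterion transfers int-amplification from $f$ to $g$. For (3), take an equivariant resolution of indeterminacy, producing a smooth projective $W$, a birational morphism $p \colon W \to X$, a morphism $q \colon W \to Y$, and a lift $\tilde f \colon W \to W$ satisfying $p\tilde f = fp$ and $q\tilde f = gq$; combining (2) applied to $q$ with birational invariance of int-amplification under $p$ (a known consequence of the spectral criterion, where the extra eigenvalues of $\tilde f^*$ coming from exceptional divisor classes are controlled by the $\tilde f$-action permuting the exceptional set) yields both directions of the equivalence.

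For (4), the ramification formula for the finite surjective endomorphism $f$ on the normal $\Q$-Gorenstein variety $X$ reads $K_X \equiv f^*K_X + R_f$ with $R_f \geq 0$, equivalently $(f^* - \id)(-K_X) \equiv R_f$ in $N^1(X)_\R$. Since $f$ is int-amplified, no eigenvalue of $f^*$ equals $1$, so $f^* - \id$ is invertible and $-K_X$ is determined numerically. Iterating gives
\[
(f^n)^*(-K_X) \equiv -K_X + \sum_{i=0}^{n-1}(f^i)^*R_f,
\]
whose right-hand side is effective. The main obstacle is upgrading this numerical identity into an actual effective representative of the class $-K_X$: I would apply Perron--Frobenius theory to the $f^*$-invariant pseudo-effective cone, on which $f^*$ has spectral radius greater than one, to produce a nef (pseudo-effective) eigenclass, and then combine with the iterated identity to extract an effective $\Q$-divisor numerically equivalent to $-K_X$, along the lines of Meng--Zhang.
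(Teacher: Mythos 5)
The paper's own ``proof'' of this lemma is a one-line citation to Meng (Lemmas~3.3, 3.5, 3.6 and Theorem~1.5 of \cite{meng}), so you are inherently attempting more than the paper does; the fair comparison is with Meng's arguments. Your part (1) is correct and is exactly the telescoping argument Meng gives. Part (2) is also correct: the injectivity of $\pi^{*}$ plus the intertwining $f^{*}\pi^{*}=\pi^{*}g^{*}$ shows that the spectrum of $g^{*}$ is a subset of that of $f^{*}$, and Meng's spectral criterion does the rest.

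Part (3) has a technical wrinkle you have glossed over. An endomorphism of a singular variety does not in general lift to a \emph{morphism} on a smooth resolution, so the ``equivariant resolution of indeterminacy $W$ with a lift $\tilde f\colon W\to W$'' you posit need not exist. The standard device is to pass to the normalization $\Gamma$ of the closure of the graph of $\pi$ in $X\times Y$: since $(f,g)$ preserves the graph, it induces an honest endomorphism $h\colon\Gamma\to\Gamma$ with generically finite surjective projections $p\colon\Gamma\to X$ (birational) and $q\colon\Gamma\to Y$. From there one either applies (2) to $p$ and $q$ together with a birational-invariance statement for $\Gamma\to X$, or uses the characterization of int-amplified in terms of $\deg f$ and dynamical degrees, which are invariants of the birational/generically finite conjugacy class. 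Your remark that the extra eigenvalues of $\tilde f^{*}$ on exceptional classes are ``controlled by the permutation of the exceptional set'' is the right intuition, but as stated it presupposes a morphism that need not exist.

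Part (4) has a genuine gap. After the ramification identity $(f^{*}-\id)(-K_{X})\equiv R_{f}$, the displayed iteration
\[
(f^{n})^{*}(-K_{X})\equiv -K_{X}+\sum_{i=0}^{n-1}(f^{i})^{*}R_{f}
\]
still has $-K_{X}$ on the right, so asserting that the right-hand side is effective is circular: effectivity of $-K_{X}$ is what you are trying to prove. The correct way to exploit invertibility of $f^{*}-\id$ is to observe that $(f^{*})^{-1}=(\deg f)^{-1}f_{*}$ on $N^{1}(X)$, hence $(f^{*})^{-1}$ preserves the cone of effective classes, and the Neumann series $(f^{*}-\id)^{-1}=\sum_{k\geq1}(f^{*})^{-k}$ converges because every eigenvalue of $(f^{*})^{-1}$ has modulus $<1$. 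This gives
\[
-K_{X}\equiv \sum_{k\geq1}\frac{1}{(\deg f)^{k}}(f^{k})_{*}R_{f},
\]
a limit of classes of effective $\Q$-divisors, i.e.\ pseudo-effectivity of $-K_{X}$. Upgrading ``pseudo-effective'' to ``numerically equivalent to an effective $\Q$-divisor'' is precisely the nontrivial content of Meng's Theorem~1.5, and your closing ``apply Perron--Frobenius to the pseudo-effective cone'' sketch does not supply that step; as written it only reproduces the eigenvector argument you already used, not an actual effective representative.
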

\begin{proof}
See \cite[Lemma 3.3, 3,5, 3.6, Theorem 1.5]{meng}.
\end{proof}


\begin{thm}[Meng-Zhang]\label{equiv-thm}
Let $X$ be a $\Q$-factorial normal projective variety over $\QQ$ admitting an int-amplified endomorphism.
Let $ \Delta$ be an effective $\Q$-divisor on $X$ such that $(X, \Delta)$ is a klt pair.
\begin{enumerate}
\item\label{fin-st} There are only finitely many $(K_{X}+ \Delta)$-negative extremal rays of $ \overline{NE}(X)$.
Moreover, let $f \colon X \longrightarrow X$ be a surjective endomorphism of $X$.
Then every $(K_{X}+ \Delta)$-negative extremal ray is fixed by the linear map $(f^{n})_{*}$ for some $n>0$.
\item\label{end-induced}  Let $f \colon X \longrightarrow X$ be a surjective endomorphism of $X$.
Let $R$ be a $(K_{X}+ \Delta)$-negative extremal ray and $\pi \colon X \longrightarrow Y$ its contraction.
Suppose $f_{*}(R)=R$.
Then,
\begin{enumerate}
\item \label{induced-on-contr} $f$ induces an endomorphism $g \colon Y \longrightarrow Y$ such that $g\circ \pi=\pi \circ f$;
\item \label{induced-on-flip}if $\pi$ is a flipping contraction and $X^{+}$ is the flip, the induced rational self-map $h \colon X^{+} \dashrightarrow X^{+}$
is a morphism.
\end{enumerate}
\item  In particular, for any finite sequence of $(K_{X}+ \Delta)$-MMP and for any surjective endomorphism $f \colon X \longrightarrow X$,
there exists a positive integer $n>0$ such that the sequence of MMP is equivariant under $f^{n}$.
\end{enumerate}
\end{thm}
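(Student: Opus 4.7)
The plan is to follow Meng--Zhang's strategy and rely heavily on the fact that admitting an int-amplified endomorphism forces strong numerical constraints on $X$, in particular (Lemma \ref{lem:intamp}(4)) that $-K_X$ is numerically equivalent to an effective $\Q$-divisor. I would prove the three parts in order, since (2) feeds into (3).

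For part (\ref{fin-st}), the finiteness of $(K_X+\Delta)$-negative extremal rays is the arithmetic heart of the theorem. The cone theorem for klt pairs gives
\[
\overline{NE}(X) = \overline{NE}(X)_{K_X+\Delta \geq 0} + \sum_i R_i,
\]
and the task is to show the sum is finite. My approach is to use an int-amplified $f$ to produce a bounding effective class: writing $-K_X \equiv E$ with $E$ effective, together with the $f$-equivariant structure, one can show that all $(K_X+\Delta)$-negative extremal rays meet a fixed compact slice of $\overline{NE}(X)$ cut out by an ample class, hence there are only finitely many. For the second half, observe that for any surjective $f$, pullback $f^*$ sends ample classes to ample classes, so $f_*$ acts on $\overline{NE}(X)$ and permutes its extremal rays; in particular it permutes the finite set of $(K_X+\Delta)$-negative ones. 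Any permutation of a finite set has finite order, so $(f^n)_*$ fixes each such ray for some $n>0$.

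For part (\ref{end-induced}), assume $f_*(R)=R$ with contraction $\pi\colon X\to Y$. For (\ref{induced-on-contr}), since $\pi$ contracts exactly the curves whose numerical classes lie in $R$ and $f_*(R)=R$, every curve contracted by $\pi$ is mapped by $f$ to a curve contracted by $\pi$ (using properness and connectedness of fibers, together with $\pi_*\mathcal{O}_X=\mathcal{O}_Y$). Thus each fiber of $\pi$ is sent into a fiber of $\pi$, so $\pi\circ f$ factors through $\pi$ as a morphism $g\colon Y\to Y$; surjectivity of $g$ is immediate from that of $f$ and $\pi$. For (\ref{induced-on-flip}), the induced birational self-map $h\colon X^+\dashrightarrow X^+$ a priori exists from (\ref{induced-on-contr}) applied to $X^+\to Y$ using the isomorphism $X\setminus\Exc(\pi)\xrightarrow{\sim} X^+\setminus\Exc(\pi^+)$; the point is to upgrade $h$ to a morphism. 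My plan is to use the standard construction of the flip: $X^+=\Proj_Y\bigoplus_{m\geq 0}\pi_*\mathcal{O}_X(m(K_X+\Delta))$. The endomorphism $f$ induces an action on $\pi_*\mathcal{O}_X(m(K_X+\Delta))$ compatible with $g$, which globalizes to a $Y$-morphism $X^+\to X^+$. Equivalently, one checks that $h$ is defined in codimension one (it is an isomorphism there) and that $-(K_{X^+}+\Delta^+)$ is $\pi^+$-ample and pulled back to itself up to positive multiple, so $h$ cannot contract any divisor and is finite in codimension one; by Zariski's main theorem applied to the normalization of the graph, $h$ is a morphism.

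Finally, part (3) follows by iterating (1) and (2) along the MMP and invoking Lemma \ref{lem:intamp}(1)--(3) at each step: given a finite sequence
\[
X = X_0 \dashrightarrow X_1 \dashrightarrow \cdots \dashrightarrow X_N,
\]
apply (1) to $X_0$ to obtain $n_0$ with $(f^{n_0})_*$ fixing the relevant extremal ray; (2) then gives an endomorphism on $X_1$ (or its flip), which is again int-amplified by Lemma \ref{lem:intamp}(2),(3); repeat. Taking $n$ to be the product of the $n_i$ yields the claim. The main obstacle I anticipate is (\ref{induced-on-flip}): descending the morphism property across the flip is delicate because $h$ is only rational on the nose, and making the $\Proj$ argument rigorous requires checking $f$-equivariance of the relative canonical algebra. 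Everything else is a careful but essentially formal cone-and-contraction argument once Lemma \ref{lem:intamp}(4) is in hand.
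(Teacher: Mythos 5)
The paper's own ``proof'' of Theorem \ref{equiv-thm} consists entirely of citations to Meng and Zhang: part (\ref{fin-st}) is referred to \cite[Theorem 4.6]{meng-zhang2} and part (\ref{induced-on-flip}) to \cite[Lemma 6.6]{meng-zhang}, with (\ref{induced-on-contr}) dispatched in one line. So you are doing more work than the paper by attempting a genuine argument. Your treatment of (\ref{induced-on-contr}) and of part (3) is correct and matches the paper's one-line justification: the contraction is determined by $R$, a curve class in $R$ is sent by $f_*$ into $R$, so fibers map into fibers and the rigidity lemma factors $\pi\circ f$ through $\pi$; iterating and passing to a common power gives (3).

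There are, however, real gaps in (\ref{fin-st}) and (\ref{induced-on-flip}). For (\ref{fin-st}), ``the negative extremal rays meet a fixed compact slice'' is true but vacuous (every extremal ray meets the slice); the actual issue is accumulation towards the hyperplane $K_X+\Delta=0$, and knowing $-K_X$ is numerically equivalent to an effective divisor (Lemma \ref{lem:intamp}(4)) does not rule this out -- a rational elliptic surface has effective $-K_X$ yet infinitely many $K_X$-negative extremal rays. Finiteness genuinely requires the expansion properties of the int-amplified endomorphism, which your sketch never invokes. The second half of (\ref{fin-st}) has a related gap: you assert $f_*$ permutes the finite set of $(K_X+\Delta)$-negative rays, but since $f^*(K_X+\Delta)=K_X+f^*\Delta-R_f$ is not proportional to $K_X+\Delta$, the hyperplane $(K_X+\Delta)=0$ is not $f_*$-invariant, and it is not automatic that $f_*$ preserves negativity; this needs an argument. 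For (\ref{induced-on-flip}), there is a sign error: for a flip, $K_{X^+}+\Delta^+$ is $\pi^+$-ample, not $-(K_{X^+}+\Delta^+)$. More substantively, your fallback Zariski-main-theorem argument is incomplete: showing $h$ contracts no divisor controls the \emph{second} projection from the normalized graph, whereas to conclude $h$ is a morphism you need the \emph{first} projection to be small, i.e.\ that resolving the indeterminacy of $h$ does not introduce an exceptional divisor over $X^+$ -- this is not established. Your preferred route via $\operatorname{Proj}$ of the relative canonical algebra is the right one and is essentially \cite[Lemma 6.6]{meng-zhang}, but the needed equivariance -- a compatible action of $f$ on $\bigoplus_{m\ge 0}\pi_*\mathcal O_X(m(K_X+\Delta))$ despite $f^*(K_X+\Delta)$ not being $\Q$-linearly equivalent to $K_X+\Delta$ -- is exactly the nontrivial content, and you flag it without supplying it.
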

\begin{proof}
(\ref{fin-st}) is a special case of  \cite[Theorem 4.6]{meng-zhang2}.
(\ref{induced-on-contr}) is true since the contraction is determined by the ray $R$.
(\ref{induced-on-flip}) follows from \cite[Lemma 6.6]{meng-zhang}.
\end{proof}

\begin{thm}[Equivariant MMP (Meng-Zhang)]\label{equiMMP}
Let $X$ be a $\Q$-factorial klt projective variety over $\QQ$ admitting an int-amplified endomorphism.
Then for any surjective endomorphism $f \colon X \longrightarrow X$, there exists a positive integer $n>0$ and 
a sequence of rational maps
\begin{align*}
X=X_{0} \dashrightarrow X_{1} \dashrightarrow \cdots \dashrightarrow X_{r}
\end{align*}
such that
\begin{enumerate}
\item $X_{i} \dashrightarrow X_{i+1}$ is either the divisorial contraction, flip, or Fano contraction of a 
$K_{X_{i}}$-negative extremal ray;
\item there exist surjective endomorphisms $g_{i} \colon X_{i} \longrightarrow X_{i}$ for $i=0, \dots ,r$
such that $g_{0}=f^{n}$ and the following diagram commutes:
\[
\xymatrix{
X_{i} \ar@{-->}[r] \ar[d]_{g_{i}} & X_{i+1} \ar[d]^{g_{i+1}}\\
X_{i} \ar@{-->}[r] & X_{i+1} ;
}
\]
\item $X_{r}$ is a $Q$-abelian variety (i.e. there exists a quasi-\'etale finite surjective morphism $A \longrightarrow X_{r}$ from an abelian variety $A$.
Note that $X_{r}$ might be a point.)
In this case, there exists a quasi-\'etale finite surjective morphism $A \longrightarrow X_{r}$ from an abelian variety $A$
and an surjective endomorphism $h \colon A \longrightarrow A$ such that the diagram
\[
\xymatrix{
A \ar[r]^{h} \ar[d] & A \ar[d]\\
X_{r} \ar[r]_{g_{r}} & X_{r} ;
}
\]
commutes;
\end{enumerate}
\end{thm}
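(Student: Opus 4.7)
The plan is to build the sequence inductively by running a $K_X$-MMP, using Theorem \ref{equiv-thm} at each stage to descend the endomorphism, and then identifying the endpoint via the structure theory for $K$-trivial klt varieties admitting an int-amplified endomorphism. By Lemma \ref{lem:intamp}(4) there is an effective $\Q$-divisor $D \equiv -K_X$; in particular $-K_X$ is pseudo-effective, and if $K_X$ is already nef we are in the $K_X \equiv 0$ case and can jump directly to the endpoint identification below.

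Inductively, suppose we have arrived at a $\Q$-factorial klt projective variety $X_i$ equipped with a surjective endomorphism $g_i$ induced from $f$, still int-amplified by Lemma \ref{lem:intamp}(2)(3). If $K_{X_i}$ is nef we stop; otherwise Theorem \ref{equiv-thm}(\ref{fin-st}) says the $K_{X_i}$-negative extremal rays are finite in number and each one is fixed by $(g_i^{m})_{*}$ for some $m$, so by finiteness a single large enough power of $g_i$ fixes all of them simultaneously. Theorem \ref{equiv-thm}(\ref{end-induced}) then carries the endomorphism through the chosen contraction (divisorial or Fano) and, in the flipping case, through the flip as well, producing the commutative square required by item (2) of the statement. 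Since the three MMP operations preserve $\Q$-factoriality and klt-ness, the induction continues.

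Termination follows from the usual trichotomy: $\rho$ decreases at a divisorial contraction, $\dim$ decreases at a Fano contraction, and flips terminate by BCHM in the Fano-type regime that is relevant here (with a small perturbation by an effective divisor and scaling by an ample divisor used to actually launch the MMP). Hence the process halts at some $X_r$ with $K_{X_r}$ nef; combined with $-K_{X_r}$ pseudo-effective (Lemma \ref{lem:intamp}(4) applied to $g_r$), this forces $K_{X_r} \equiv 0$. By the structure theorem of Meng and Zhang for klt, $K$-trivial projective varieties admitting an int-amplified endomorphism, $X_r$ is $Q$-abelian and $g_r$ lifts to an endomorphism $h$ of the abelian cover $A$. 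The finitely many power-passings along the MMP can be absorbed into a single positive integer $n$ by Lemma \ref{lem:intamp}(1).

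The main obstacle is the final identification: nothing in the MMP bookkeeping alone forces $X_r$ to be covered quasi-\'etale by an abelian variety or $g_r$ to lift compatibly. That step requires the separate rigidity theorems of Meng and Zhang on the possible structure of $K$-trivial klt varieties carrying an endomorphism of dynamical degree greater than one, rather than pure MMP manipulations.
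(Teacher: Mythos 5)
The paper does not actually prove this statement; it simply cites it as part of \cite[Theorem 1.2]{meng-zhang2}. So there is no in-house argument to compare against, and what you have written is a reconstruction of Meng--Zhang's own line of reasoning. Your outline is essentially right: use Lemma~\ref{lem:intamp}(4) to get $-K_{X_i}$ pseudo-effective at every stage; observe that if some $K_{X_i}$ is not nef, then $K_{X_i}\not\equiv 0$, and combined with $-K_{X_i}$ pseudo-effective this forces $K_{X_i}$ \emph{not} pseudo-effective (the pseudo-effective cone is salient), so BCHM supplies a terminating $K_{X_i}$-MMP with scaling ending in a Mori fiber space; make the finitely many steps equivariant via Theorem~\ref{equiv-thm}(\ref{fin-st}) and push the endomorphism through by Theorem~\ref{equiv-thm}(\ref{end-induced}); after a Fano contraction continue on the base, noting that $\rho$ or $\dim$ drops; the minimal-model endpoint has $K\equiv 0$, hence $K\sim_{\Q}0$ by Gongyo, and Meng's structure theorem for $K$-trivial klt varieties with an int-amplified endomorphism (the one the paper later invokes as \cite[Theorem 5.2]{meng}) yields the Q-abelian cover and the equivariant lift. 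You are also right, and candid, that this last identification is where the work genuinely lies and cannot be extracted from MMP bookkeeping alone.

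Two small points worth making more precise. First, ``flips terminate by BCHM in the Fano-type regime'' is a slightly misleading phrase: what is actually used is that $K_{X_i}+\epsilon A$ is still not pseudo-effective for small ample $A$, so a $(K_{X_i}+\epsilon A)$-MMP with scaling terminates by BCHM, and since $A$ is nef every such step is in particular a $K_{X_i}$-negative extremal contraction; ``Fano type'' ($-K$ big) need not hold here. Second, your sentence ``the three MMP operations preserve $\Q$-factoriality and klt-ness'' glosses over the Fano-contraction case: $\Q$-factoriality of the base is standard, but klt-ness of the base of a Fano contraction is not automatic --- it needs the canonical bundle formula (this is exactly Lemma~\ref{canonical bundle formula lem} in the paper, relying on Ambro). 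Neither is a gap in the strategy, but both deserve a word if this is to read as a proof rather than a sketch.
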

\begin{proof}
This is a part of \cite[Theorem 1.2]{meng-zhang2}.
\end{proof}

\begin{rmk}
Surjective endomorphisms on a Q-abelian variety always lift to a certain quasi-\'etale cover by an abelian variety.
See \cite[Lemma 8.1 and Corollary 8.2]{cmz}, for example. The proof works over any algebraically closed field.
\end{rmk}

\section{Covering Theorem}\label{sec:covthm}

In this section, we prove that a certain type of dynamical system admits a quasi-\'etale cover which dominates an abelian variety.
We also prove that such covering ascends along processes of equivariant MMP.

\subsection{Main Components}

First, we recall basic facts on main components of fiber products.

\begin{lem}\label{main comp}
Consider the following commutative diagram:
\[
\xymatrix{
\widetilde{X} \ar[r]^(.4){ \alpha} \ar@/^18pt/[rr]^{ \widetilde{\pi}} \ar[rd]_{\mu_{X}} & X\times_{Z}A  \ar[r]^(.6){q} \ar[d]_{p} & A \ar[d]^{\mu_{Z}}\\
& X \ar[r]_{\pi} & Z
}
\]
where $X, Z, A, \widetilde{X}$ are normal projective varieties, $\mu_{X}, \mu_{Z}$ are finite surjective morphisms,
$ \widetilde{\pi}$ is a surjective morphism,  and $\pi$ is an algebraic fiber space.
Let $U \subset Z$ be a dense open subset such that $\mu_{Z}^{-1}(U) \longrightarrow U$ is \'etale.
Then the open subscheme $p^{-1}(\pi^{-1}(U)) \subset X\times_{Z}A$ is an irreducible normal variety.
The closure $M$ of this subset in $X\times_{Z}A$ is the unique irreducible component of $X\times_{Z}A$ that dominates $X$.
We call this $M$ the main component of $X\times_{Z}A$ and equip it with the reduced structure.

Moreover,
$ \alpha$ is the normalization of $M$ if and only if the canonical homomorphism $k(X) {\otimes}_{ k(Z)} k(A) \longrightarrow k( \widetilde{ X})$ is 
an isomorphism.

If this is the case and
$X, Z, A$ are equipped with surjective endomorphisms equivariantly, then they induce a surjective endomorphism on $ \widetilde{X}$.
\end{lem}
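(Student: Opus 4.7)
The plan is to treat the four claims (irreducibility and normality of $p^{-1}(\pi^{-1}(U))$, uniqueness of the main component $M$, function-field characterization of when $\alpha$ is the normalization, endomorphism descent) in sequence.

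I would first observe that $p^{-1}(\pi^{-1}(U)) \to \pi^{-1}(U)$ is the \'etale base change of $\mu_Z^{-1}(U) \to U$, so normality of $X$ transports immediately. For irreducibility, the standard characteristic-zero fact I would invoke is that an algebraic fiber space $\pi \colon X \to Z$ between normal projective varieties has geometrically integral generic fiber $X_\eta$ (combining $\pi_*\mathcal{O}_X = \mathcal{O}_Z$, normality of $X$, and perfectness of $k(Z)$ in characteristic zero). Then $X_\eta \otimes_{k(Z)} k(A)$ is integral, so $k(X) \otimes_{k(Z)} k(A)$ is a domain; since $\mu_Z$ is finite, $k(A)/k(Z)$ is a finite extension and this domain is a finite $k(X)$-algebra, hence itself a field, which one recognizes as the function field of $p^{-1}(\pi^{-1}(U))$. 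The closure $M$ is therefore an irreducible component of $X\times_Z A$; any other component $W$ dominating $X$ would have $W \cap p^{-1}(\pi^{-1}(U))$ nonempty open in the irreducible $W$ (as $p|_W$ is dominant) and contained in $M$, forcing $W \subset M$ and thus $W = M$ by maximality.

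For the normalization characterization, the key preliminary fact is that $\alpha$ is automatically finite: $p$ is the base change of the finite (hence affine) $\mu_Z$ so $p$ is affine and therefore separated, and $p \circ \alpha = \mu_X$ is finite, so the usual graph-closed-immersion argument forces $\alpha$ finite. Since $\widetilde X$ is normal and irreducible, $\alpha$ lands in the unique main component $M$ and factors as a finite surjection $\widetilde X \to M^\nu$ followed by the normalization $M^\nu \to M$. The first arrow is a finite surjection between normal projective varieties of the same dimension, so it is an isomorphism iff it is birational, iff the induced extension $K(M) \hookrightarrow k(\widetilde X)$ is trivial. Combined with the identification $K(M) = k(X) \otimes_{k(Z)} k(A)$ from the first step, this is exactly the stated criterion.

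For the endomorphism clause, the product $F = f_X \times_{f_Z} f_A$ is a finite surjective endomorphism of $X \times_Z A$. The computation $p \circ F|_M = f_X \circ p|_M$ shows $F(M)$ dominates $X$; by uniqueness of the main component $F(M) \subset M$, and finiteness of $F$ preserves dimension so $F(M) = M$. The universal property of normalization then lifts $F|_M$ uniquely to an endomorphism of $M^\nu \cong \widetilde X$, which is surjective because its composition with the finite surjection $M^\nu \to M$ is. The main obstacle throughout is the irreducibility in the first step: it rests on the generic fiber of the algebraic fiber space being geometrically integral (a genuinely characteristic-zero ingredient) and on $k(A)/k(Z)$ being finite (from $\mu_Z$ finite) so that the tensor-product domain collapses to a field. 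Every remaining step is routine once that is in hand, using only the graph trick for finiteness, the uniqueness of the main component for $F$-invariance, and the universal property for the lifting.
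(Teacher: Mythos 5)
Your proof is correct and reaches all the same conclusions; the one genuinely different ingredient is the irreducibility argument for $p^{-1}(\pi^{-1}(U))$. You go through generic fibers and function fields: since $\pi$ is an algebraic fiber space, $k(Z)$ is algebraically closed in $k(X)$, and (using characteristic $0$ for separability of $k(A)/k(Z)$) the tensor product $k(X)\otimes_{k(Z)}k(A)$ is a field, so the open subscheme has a single generic point. The paper instead observes that $p^{-1}(\pi^{-1}(U))\to\mu_Z^{-1}(U)$ is again an algebraic fiber space (flat base change of $\pi$ along $\mu_Z^{-1}(U)\to U$), so its ring of global sections equals that of the integral base $\mu_Z^{-1}(U)$ and hence it is connected; normality, which both of you deduce from \'etaleness over $X$, then upgrades connected to irreducible. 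The field fact you prove inline appears in the paper only in the accompanying Remark \ref{maincompfunc}, where it is used solely to identify $K(M)$; you use it for both purposes, which is a bit more economical since it must be established anyway, while the paper's connectedness argument is a little quicker. Your treatment of the normalization criterion (finiteness of $\alpha$ via the graph trick, factoring through $M^\nu$, comparing function fields) and of the endomorphism descent is the same in substance as the paper's, only spelling out what the paper compresses into ``universality of fiber products and uniqueness of the main component.'' One small imprecision worth flagging: you assert that $F=f_X\times_{f_Z}f_A$ is a \emph{surjective} endomorphism of $X\times_Z A$. This happens to be true, but it requires an argument --- one uses connectedness of the fibers of $\pi$ to get $\pi(f_X^{-1}(x))=f_Z^{-1}(\pi(x))$, which is what saves the fiber product from failing to be surjective. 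You neither prove this nor use it: your argument only needs that $F$ is finite and that $F(M)$ dominates $X$, so the claim should either be justified or, more simply, dropped.
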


\begin{rmk}\label{maincompfunc}
In the setting of Lemma \ref{main comp}, the normalization of the main component is equal to the normalization of
$X$ in $k(X) {\otimes}_{k(Z)}k(A)$.
Note that since $k(X)\supset k(Z)$ is algebraically closed and $k(A) \supset k(Z)$ is a finite separable extension,
$k(X) {\otimes}_{k(Z)}k(A)$ is a field.
\end{rmk}

\begin{proof}[Proof of Lemma \ref{main comp}]
Since $\mu_{Z}^{-1}(U) \longrightarrow U$ is flat, $p^{-1}(\pi^{-1}(U)) \longrightarrow \pi^{-1}(U)$ is an open map 
and $p^{-1}(\pi^{-1}(U)) \longrightarrow \mu_{Z}^{-1}(U)$ is an algebraic fiber space.
This implies $p^{-1}(\pi^{-1}(U))$ is irreducible.
Since $p^{-1}(\pi^{-1}(U)) \longrightarrow \pi^{-1}(U)$ is \'etale and $X$ is normal, 
$p^{-1}(\pi^{-1}(U))$ is a normal variety.
This also proves that the uniqueness of the irreducible component that dominates $X$.

Since $k(X) {\otimes}_{k(Z)}k(A)$ is a field, the function field of $M$ is $k(X) {\otimes}_{k(Z)}k(A)$.
Therefore, $ \alpha$ is the normalization of $M$ if and only if $k(X) {\otimes}_{ k(Z)} k(A) \longrightarrow k( \widetilde{ X})$ is an 
isomorphism.

The last statement follows from the universality of the fiber products and the uniqueness of the main component.
\end{proof}

\begin{lem}\label{maincomp2}
Consider the following commutative diagram:
\[
\xymatrix{
\widetilde{X} \ar[r] \ar[d] & \widetilde{Y} \ar[r] \ar[d] & A \ar[d]^{\mu} \\
X \ar[r]_{p} & Y \ar[r]_{q} & Z
}
\]
where $X, Y, Z, \widetilde{X}, \widetilde{Y}$, and $A$ are normal projective varieties,
$p, q$ are algebraic fiber spaces, and $\mu$ is a finite surjective morphism.
Suppose that $ \widetilde{Y}$ is the normalization of the main component of $Y \times_{Z} A$ and
$ \widetilde{X}$ is the normalization of the main component of $X \times_{Y} \widetilde{Y}$.
Then $ \widetilde{X}$ is the normalization of the main component of $X \times_{Z}A$.
\end{lem}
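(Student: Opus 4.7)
The plan is to identify all three ``normalizations of main components'' with normalizations of the base in appropriate tensor products of function fields, and then exploit associativity of the tensor product. First, I would invoke Remark \ref{maincompfunc} on both hypotheses: the assumption on $\widetilde{Y}$ gives $k(\widetilde{Y}) \cong k(Y) \otimes_{k(Z)} k(A)$, while the assumption on $\widetilde{X}$ gives $k(\widetilde{X}) \cong k(X) \otimes_{k(Y)} k(\widetilde{Y})$. Substituting and using associativity,
\[
k(\widetilde{X}) \cong k(X) \otimes_{k(Y)} \bigl(k(Y) \otimes_{k(Z)} k(A)\bigr) \cong k(X) \otimes_{k(Z)} k(A).
\]
To legitimately apply the tensor-product description of Lemma \ref{main comp} to the composition $q \circ p : X \to Z$, I need $k(Z)$ to be algebraically closed in $k(X)$, which follows because $q \circ p$ is again an algebraic fiber space: $(q \circ p)_{*}\mathcal{O}_{X} = q_{*}(p_{*}\mathcal{O}_{X}) = q_{*}\mathcal{O}_{Y} = \mathcal{O}_{Z}$. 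Hence $k(X) \otimes_{k(Z)} k(A)$ is a field, and by Remark \ref{maincompfunc} it is the function field of the main component $M$ of $X \times_{Z} A$.

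Next, I would construct an explicit morphism $\widetilde{X} \to X \times_{Z} A$ and show it lands in $M$. The normalization map $\widetilde{Y} \to Y \times_{Z} A$ is finite, so its base change $X \times_{Y} \widetilde{Y} \to X \times_{Y}(Y \times_{Z} A) \cong X \times_{Z} A$ is also finite; composing with the finite normalization $\widetilde{X} \to X \times_{Y} \widetilde{Y}$ yields a finite morphism $\widetilde{X} \to X \times_{Z} A$. It dominates $X$ because the main component of $X \times_{Y} \widetilde{Y}$ does, so its image is contained in $M$.

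To finish, observe that $\widetilde{X}$ is a normal projective variety mapping finitely to $M$, and by the computation above the induced extension $k(X) \hookrightarrow k(\widetilde{X})$ agrees with $k(X) \hookrightarrow k(M)$. Thus $\widetilde{X} \to M$ factors through the normalization $M^{\nu} \to M$, and the resulting finite birational morphism $\widetilde{X} \to M^{\nu}$ between normal projective varieties must be an isomorphism. I do not expect a substantial obstacle here: the whole argument reduces to associativity of tensor product plus the routine observations that algebraic fiber spaces compose and that finiteness of the normalization of the main component is preserved under base change and composition.
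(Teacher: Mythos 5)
Your proof is correct and takes essentially the same approach as the paper: the paper's entire proof is your first paragraph, computing $k(\widetilde X) \cong k(X) \otimes_{k(Z)} k(A)$ via Remark \ref{maincompfunc} and associativity of the tensor product, and then concluding by the characterization of the normalization of the main component as the normalization of $X$ in that field. Your second and third paragraphs, which explicitly construct the finite morphism $\widetilde X \to X\times_Z A$ and verify it factors isomorphically through the normalization of $M$, are a sound but unnecessary elaboration of what the paper leaves implicit through uniqueness of normalization in a fixed function field extension.
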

\begin{proof}
By Lemma \ref{main comp}, $k( \widetilde{Y})=k(Y) {\otimes}_{k(Z)} k(A)$ and
$k( \widetilde{X})= k(X) {\otimes}_{k(Y)} k( \widetilde{Y})$.
Therefore, we have $k( \widetilde{X})=k(X) {\otimes}_{k(Z)} k(A)$.
By Lemma \ref{main comp} again, $ \widetilde{X}$ is the normalization of the main component of $X\times_{Z}A$.
\end{proof}

\subsection{Covering Theorem} 

The following theorem is taken form an upcoming paper "Globally $F$-splitness of surfaces admitting an int-amplified endomorphism" by the second author.
We use this theorem in the proof of the main theorem to show certain kind of fibrations do not appear during the process of equivariant MMP.

\begin{thm}\label{covlem}
Consider the following commutative diagram:
\[
\xymatrix{
X \ar[r]^{f} \ar[d]_{\pi} & X \ar[d]^{\pi} \\
Y \ar[r]_{g} & Y 
}
\]
where $X$ is a klt $\Q$-factorial normal projective variety, $\pi$ is a $K_{X}$-negative extremal ray contraction of fiber type,
$f$ is an int-amplified endomorphism.
Assume for all irreducible component $E\subset \Supp R_{f}$, we have $\pi(E)=Y$.  
Then there exists a following equivariant  commutative  diagram: 
\[
\xymatrix{
f \acts X \ar@<2.3ex>[d]_{\pi}& \ar[l]_{\mu_{X}} \widetilde{X} \ar@<-2.3ex>[d]^{ \widetilde{\pi}} \racts \widetilde{f} \\
g \acts Y & \ar[l]^{\mu_{Y}} A \racts g_{A}
}
\]
where $A$ is an abelian variety, $\mu_{Y}$ is a finite surjective morphism, $ \widetilde{X}$ is the normalization of the main component of $X {\times}_{Y}A$
satisfying the following:
\begin{itemize}
\item $ \widetilde{X}$ is a $\Q$-Gorenstein klt normal projective variety;
\item $\mu_{X}$ is a finite surjective quasi-\'etale morphism;
\item $ \widetilde{\pi}$ is an algebraic fiber space.
\end{itemize}
\end{thm}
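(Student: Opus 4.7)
The plan is to first show that $Y$ is $Q$-abelian using the ramification hypothesis, and then construct $\widetilde{X}$ as the normalization of the main component of $X\times_{Y}A$ and invoke Lemma~\ref{main comp} to lift the endomorphism.

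First, by Lemma~\ref{lem:intamp}(2) the induced endomorphism $g\colon Y\to Y$ is int-amplified, and $Y$ is $\Q$-factorial klt as the base of a Mori fiber space contraction from such. The key assertion is that the hypothesis on $\Supp R_f$ forces $g$ to be quasi-\'etale (i.e. $R_{g}=0$). Granted this, the identity $K_Y\sim g^{*}K_Y+R_g$ reads $(1-g^{*})[K_Y]=[R_g]=0$ in $N^{1}(Y)_{\R}$; since $g^{*}$ has all eigenvalues of absolute value strictly greater than one (a standard characterization of int-amplified endomorphisms), $1-g^{*}$ is invertible and therefore $K_Y\equiv 0$. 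This rules out any $K_Y$-negative extremal ray, so the equivariant MMP of Theorem~\ref{equiMMP} applied to $(Y,g)$ is trivial, and $Y$ itself is $Q$-abelian.

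To prove $R_g=0$, suppose for contradiction that $D\subset Y$ is a prime divisor with ramification index $e_{g}(\eta_{D})\geq 2$. Since $\pi$ is an equidimensional Mori fiber space contraction, every irreducible component $E$ of $\pi^{-1}(D)$ is a prime divisor on $X$ with $\pi(E)=D\neq Y$, hence vertical. Comparing ramification indices at the generic point $\xi_{E}$ via $\pi\circ f=g\circ\pi$ yields
\[
e_{f}(\xi_{E})\cdot e_{\pi}(f(\xi_{E}))=e_{\pi}(\xi_{E})\cdot e_{g}(\pi(\xi_{E})).
\]
Applied inductively, and noting that every image $f^{n}(\xi_{E})$ is again vertical (since $\pi(f^{n}(\xi_{E}))=g^{n}(\eta_{D})$ is of codimension one in $Y$), the hypothesis on $\Supp R_f$ gives $e_{f}(f^{n}(\xi_{E}))=1$ throughout the orbit. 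This forces $e_{\pi}(f^{n}(\xi_{E}))$ to multiply by $e_{g}(\pi(f^{n}(\xi_{E})))$ at each step; combined with the finiteness of codimension one points of $X$ at which $e_{\pi}\geq 2$ (so the orbit becomes periodic) and the multiplicativity of $e_{g}$ around any cycle (which must yield $\prod e_g=1$), we obtain a contradiction to $e_{g}(\eta_{D})\geq 2$.

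Once $Y$ is $Q$-abelian, let $\mu_{Y}\colon A\to Y$ be a quasi-\'etale finite surjective morphism with $A$ abelian. After replacing $f$ (hence $g$) by a suitable iterate, which preserves int-amplification by Lemma~\ref{lem:intamp}(1), $g$ lifts to $g_{A}\colon A\to A$ (remark after Theorem~\ref{equiMMP}). Define $\widetilde{X}$ as the normalization of the main component of $X\times_{Y}A$; by Remark~\ref{maincompfunc} its function field is $k(X)\otimes_{k(Y)}k(A)$, so Lemma~\ref{main comp} produces the induced endomorphism $\widetilde{f}$ and the desired equivariant diagram. The remaining properties follow routinely: $\mu_{X}$ is quasi-\'etale because $\mu_{Y}$ is and base change preserves \'etaleness at codimension one points; $\widetilde{X}$ is $\Q$-Gorenstein klt since $R_{\mu_{X}}=0$ gives $K_{\widetilde{X}}=\mu_{X}^{*}K_{X}$ and klt is preserved under quasi-\'etale cover; and $\widetilde{\pi}$ is an algebraic fiber space since its generic fiber is the base change of the geometrically integral generic fiber of $\pi$ by the separable extension $k(Y)\hookrightarrow k(A)$. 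The main obstacle is the derivation of $R_{g}=0$: the orbit/multiplicativity argument requires careful bookkeeping of the ramification indices of $\pi$, $f$, and $g$ simultaneously, and this is the step that genuinely uses the hypothesis that every component of $\Supp R_f$ dominates $Y$.
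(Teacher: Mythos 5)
The central claim of your argument---that the hypothesis on $\Supp R_f$ forces $R_g = 0$---is false, and this breaks the whole proof from the start.

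Let us look carefully at what the hypothesis actually gives. Using Lemma~\ref{irreducible fiber lemma}, write $\pi^*E = m_E F_E$ for each prime divisor $E$ on $Y$. If $g$ maps a prime divisor $D$ to $D'$ with ramification index $a = e_g(\eta_D)$, then comparing $\pi^*g^*D'$ with $f^*\pi^*D'$ and using that $f$ is unramified along vertical divisors, one finds $m_{D'} = a\, m_D$. This is the entire constraint. It does rule out $a \geq 2$ when $D$ is $g$-periodic (iterating around a cycle would force the $m$-values to strictly increase and return to themselves), but it does \emph{not} rule out $a \geq 2$ when $D$ lies in the pre-periodic tail of its $g$-orbit. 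Your argument correctly observes that $\{m_{g^n(D)}\}_{n\geq 1}$ is non-decreasing, takes finitely many values, hence the orbit is eventually periodic and no ramification occurs within the cycle. But there is no reason for $D$ itself to lie in the cycle, so there is no contradiction to $e_g(\eta_D)\geq 2$. In short, forward orbits of a finite self-map are only \emph{eventually} periodic, and the "$\prod e_g = 1$ around a cycle" observation only constrains the cycle, not the tail where $D$ may sit.

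What the hypothesis actually buys is weaker but sufficient: it says exactly that $\ord_{E_i}(g^*\Delta - \Delta) = \ord_{E_i}(R_g)$ for the boundary $\Delta = \sum_E \frac{m_E-1}{m_E}E$, i.e.\ $g^*(K_Y+\Delta)\sim K_Y+\Delta$. The paper's proof builds on this: $(1-g^*)$ is invertible on $N^1$ since $g$ is int-amplified, so $K_Y+\Delta\equiv 0$; then $(Y,\Delta)$ is log canonical by Broustet--H\"oring and $K_Y+\Delta\sim_\Q 0$ by Gongyo's abundance; one then takes the index-one cover $Y_1\to Y$ of the pair, which kills the boundary because $\Delta$ has standard coefficients (Lemma~\ref{index-one covering lem}), so that $K_{Y_1}\sim 0$ and the lifted endomorphism $g_1$ is quasi-\'etale; only after all of this does Meng's structure theorem produce the abelian cover $A\to Y_1\to Y$. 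Crucially, $\mu_{Y_1}$ (and hence $\mu_Y$) is \emph{not} quasi-\'etale---it is ramified along $\Supp\Delta$---so your final step "\,$\mu_X$ is quasi-\'etale because $\mu_Y$ is and base change preserves \'etaleness\," also fails; the paper instead proves quasi-\'etaleness of $\mu_X$ via Lemma~\ref{qasi-etale lemma}, which uses the quasi-\'etaleness of $\widetilde g$ upstairs, the hypothesis on $R_f$, and the equidimensionality coming from Lemma~\ref{irreducible fiber lemma} to show that any vertical component of $R_{\mu_X}$ would produce a totally invariant prime divisor on $A$, contradicting Meng's result on int-amplified quasi-\'etale endomorphisms. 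Your plan skips the log Calabi--Yau pair $(Y,\Delta)$ and the index-one cover entirely; these are not cosmetic but are precisely what handles the genuinely-present ramification of $g$.
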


We prepare to prove the theorem.
First, we prove that if $Y$ has a finite $g$-equivariant cover by an abelian variety, then $\mu_X$ as in Theorem \ref{covlem} is quasi-\'etale.

\begin{lem}\label{qasi-etale lemma}
Let $X$ be a normal projective variety and $f \colon X \longrightarrow X$ an int-amplified endomorphism and assume that there exists the following commutative diagram:

\[
\xymatrix{
f \acts X \ar@<2.3ex>[d]_{\pi}& \ar[l]_{\mu_{X}} \widetilde{X} \ar@<-2.3ex>[d]^{ \widetilde{\pi}} \racts \widetilde{f} \\
g \acts Y & \ar[l]^{\mu_{Y}} \widetilde{Y} \racts \widetilde{g}
}
\]
where
\begin{enumerate}
\item $Y$ and $\widetilde{Y}$ are normal projective varieties; 
\item $\pi$ is an algebraic fiber space, $\mu_{X}$ and $\mu_{Y}$ are finite surjective morphisms;
\item $ \widetilde{X}$ is the normalization of the main component of $X \times_{Y} \widetilde{Y}$;
\item $g, \widetilde{g}, \widetilde{f}$ are int-amplified endomorphisms.
\end{enumerate}
Furthermore we assume the following conditions:
\begin{enumerate}
\renewcommand{\labelenumi}{(\alph{enumi})}
\item\label{tilgqet} $\widetilde{g}$ is quasi-\'etale;
\item\label{imEcodim} every prime divisor $E$ on $X$ satisfies $\mathrm{codim} (\pi(E)) \leq 1$;   
\item\label{ramhor} every irreducible component $E$ of $\Supp{R_f}$ satisfies $\pi(E) = Y$
\end{enumerate}
Then $\mu_X$ is quasi-\'etale.
\end{lem}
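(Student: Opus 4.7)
The plan is to derive the inequality $\widetilde{f}^{*}R_{\mu_X}\le R_{\mu_X}$ between effective Weil divisors from the ramification-composition formula for $\mu_X\circ\widetilde{f}=f\circ\mu_X$, and then to conclude $R_{\mu_X}=0$ from the int-amplified property of $\widetilde{f}$.

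First, I would show that $R_{\mu_X}$ is vertical over $\widetilde{Y}$: by Lemma \ref{main comp}, $\mu_X$ is \'etale above $\pi^{-1}(U)$ with $U\subset Y$ the \'etale locus of $\mu_Y$, so $\Supp R_{\mu_X}\subset\mu_X^{-1}(X\setminus\pi^{-1}(U))$; condition (\ref{imEcodim}) then forces each prime component of $\Supp R_{\mu_X}$ to map via $\pi\circ\mu_X$ onto a prime divisor of $Y\setminus U$, hence to be vertical over $\widetilde{Y}$. Next, the ramification formula applied to the two factorisations of $\mu_X\widetilde{f}=f\mu_X$ gives
\[
R_{\widetilde{f}}+\widetilde{f}^{*}R_{\mu_X}\;=\;R_{\mu_X}+\mu_X^{*}R_{f}.
\]
Condition (\ref{ramhor}) makes $R_f$ (hence $\mu_X^{*}R_{f}$) horizontal over $\widetilde{Y}$. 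Comparing vertical parts yields $R_{\widetilde{f}}^{v}=R_{\mu_X}-\widetilde{f}^{*}R_{\mu_X}\ge 0$; the same argument applied to every iterate $\widetilde{f}^{m}$ (still int-amplified by Lemma \ref{lem:intamp}) gives $(\widetilde{f}^{m})^{*}R_{\mu_X}\le R_{\mu_X}$ for all $m\ge 1$.

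For the contradiction, suppose $R_{\mu_X}\ne 0$. The inequality forces $\Supp R_{\mu_X}$ to be backward-$\widetilde{f}$-invariant; since it has finitely many prime components, $\widetilde{f}$ permutes them, and after replacing $\widetilde{f}$ by a suitable iterate one may assume $\widetilde{f}^{-1}(E)=\{E\}$ for each prime component $E$. Then $\widetilde{f}^{*}E=e_{\widetilde{f},E}E$, and matching coefficients in $\widetilde{f}^{*}R_{\mu_X}\le R_{\mu_X}$ forces $e_{\widetilde{f},E}=1$, so $\widetilde{f}^{*}E=E$; summing over $E$, $\widetilde{f}^{*}R_{\mu_X}=R_{\mu_X}$ as Weil divisors. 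Assuming $\widetilde{X}$ is $\Q$-Gorenstein (so $R_{\mu_X}=K_{\widetilde{X}}-\mu_X^{*}K_X$ is $\Q$-Cartier), the class $[R_{\mu_X}]\in N^{1}(\widetilde{X})_\R$ is a nonzero $\widetilde{f}^{*}$-fixed vector, contradicting the characterization of int-amplified endomorphisms whose action on $N^{1}$ has all eigenvalues of absolute value strictly greater than $1$. Hence $R_{\mu_X}=0$ and $\mu_X$ is quasi-\'etale.

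The main obstacle is ensuring $R_{\mu_X}$ is $\Q$-Cartier, equivalently that $\widetilde{X}$ is $\Q$-Gorenstein; this is part of the intended conclusion of Theorem \ref{covlem} and would need to be verified independently. An alternative endgame, avoiding this issue, is to descend along $\mu_X$ to the $\Q$-factorial base $X$ and run a parallel orbit analysis on the backward-$f$-invariant set $B_{\mu_X}=\mu_X(\Supp R_{\mu_X})$, obtaining that the image $F:=\mu_X(E)$ of each component $E$ of $\Supp R_{\mu_X}$ satisfies $f^{*}F=F$ (after further iteration of $f$), which again contradicts the int-amplified characterization on $N^{1}(X)_\R$.
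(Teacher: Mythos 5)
Your first half matches the paper exactly: both proofs show that $\Supp R_{\mu_X}$ is vertical over $\widetilde{Y}$ (via Lemma~\ref{main comp} and condition~(\ref{imEcodim})), apply the ramification formula to $\mu_X\circ\widetilde f=f\circ\mu_X$, use condition~(\ref{ramhor}) to separate horizontal from vertical parts, and conclude $\widetilde f^{*}R_{\mu_X}\le R_{\mu_X}$, whence $\Supp R_{\mu_X}$ is totally invariant under $\widetilde f$. Up to and including the permutation argument and the deduction $\widetilde f^{*}E=E$ for each component, your reasoning is correct.

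The endgame, however, has a genuine gap that you yourself flag but do not resolve. You want to treat $[R_{\mu_X}]$ (or $[E]$) as a nonzero $\widetilde f^{*}$-fixed class in $N^{1}(\widetilde X)_{\R}$, but neither $\widetilde X$ nor $X$ is assumed $\Q$-Gorenstein or $\Q$-factorial in this lemma, so there is no reason for $R_{\mu_X}$ or $\mu_X(E)$ to be $\Q$-Cartier. Your proposed fallback of descending to $X$ does not help: $X$ is only assumed normal, and in any case it is not immediate that $\mu_X(E)$ is totally $f$-invariant, since Lemma~\ref{totinvim} is stated for algebraic fiber spaces, not for finite covers like $\mu_X$. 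A telling symptom is that your argument never invokes hypothesis~(\ref{tilgqet}) (that $\widetilde g$ is quasi-\'etale), which is essential.

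The paper sidesteps the $\Q$-Cartier problem entirely by pushing \emph{down to $\widetilde Y$} instead of staying on $\widetilde X$ or going to $X$: since $\widetilde\pi$ is an algebraic fiber space, Lemma~\ref{totinvim} applies and shows $\widetilde\pi(\Supp R_{\mu_X})$ is totally invariant under $\widetilde g$; condition~(\ref{imEcodim}) together with the verticality of $E$ forces $\widetilde\pi(E)$ to be a prime divisor on $\widetilde Y$; and after an iterate, one obtains a totally invariant prime divisor on $\widetilde Y$, contradicting the fact (\cite[Lemma~3.11]{meng}) that a quasi-\'etale int-amplified endomorphism admits no totally invariant prime divisor. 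This is precisely where hypothesis~(\ref{tilgqet}) enters, and it avoids any appeal to numerical classes on $\widetilde X$ or $X$. If you wish to salvage a self-contained argument along your lines, you would need to first establish (or hypothesize) that $\widetilde X$ is $\Q$-Gorenstein, which in the paper is deduced \emph{from} the conclusion of this lemma rather than assumed before it.
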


\begin{proof}
There exists a nonempty open subset $U$ of $Y$ such that $\mu_Y^{-1}(U) \longrightarrow U$ is \'etale.
Then since $\widetilde{\pi}^{-1} (\mu_Y^{-1}(U)) = \mu_X^{-1}(\pi^{-1}(U)) = \pi^{-1}(U) \times_{U} \mu_Y^{-1}(U)$ by Lemma \ref{main comp}, $\mu_X^{-1}(\pi^{-1}(U)) \longrightarrow \pi^{-1}(U)$ is \'etale.
In particular, every irreducible component $E$ of $\Supp(R_{\mu_X})$ satisfies $\widetilde{\pi}(E) \neq \widetilde{Y}$.
On the other hand, we have
\[
\widetilde{f}^*(R_{\mu_X}) \leq \widetilde{f}^*(R_{\mu_X}) + R_{\widetilde{f}} = \mu_X^*(R_f) + R_{\mu_X}
\]
Since any component of $\mu_X^*(R_f)$ is horizontal and any component of $R_{\mu_X}$ is vertical, we obtain
\[
\widetilde{f}^*(R_{\mu_X}) \leq R_{\mu_X}
\]
as Weil divisors.
It means that $\Supp{R_{\mu_X}}$ is a totally invariant divisor under $\widetilde{f}$, 
therefore, by Lemma \ref{totinvim}, $\widetilde{\pi}(\Supp{R_{\mu_X}})$ is a totally invariant closed subset under $\widetilde{g}$.
Suppose that $\mu_X$ is not quasi-\'etale.
Let $E$ be an irreducible componet of $\Supp{R_{\mu_X}}$, then $E$ is a prime divisor.
Replacing $\widetilde{g}$ by some iterate, we may assume that $\widetilde{\pi}(E)$ is totally invariant.
By the assumption, $\mathrm{codim}(\widetilde{\pi}(E))$ is less than or equal to $1$.
Since $E$ is a vertical divisor, $\widetilde{\pi}(E)$ must be a prime divisor on $\widetilde{Y}$.
It contradicts to the fact that $\widetilde{g}$ is an int-amplified quasi-\'etale endomorphism (cf. \cite[Lemma 3.11]{meng}).
Hence $\mu_X$ is quasi-\'etale. 
\end{proof}

\begin{lem}\label{totinvim}
Consider the following commutative diagram:
\[
\xymatrix{
X \ar[r]^{f} \ar[d]_{\pi} & X \ar[d]^{\pi} \\
Z \ar[r]_{h}  & Z 
}
\]
where $X, Z$ are normal projective varieties, $\pi$ is an algebraic fiber space, and
$f, h$ are surjective endomorphism.
Let $W\subset X$ be a closed subset.
If $f^{-1}(W) = W$ as sets, then $h^{-1}(\pi(W))=\pi(W)$.
\end{lem}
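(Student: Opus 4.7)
The plan is to prove the two inclusions separately. The forward inclusion $\pi(W)\subset h^{-1}(\pi(W))$ is easy: since $f$ is surjective and $f^{-1}(W)=W$, we have $f(W)=W$, so $h(\pi(W))=\pi(f(W))=\pi(W)$, giving $\pi(W)\subset h^{-1}(h(\pi(W)))=h^{-1}(\pi(W))$.

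For the reverse inclusion $h^{-1}(\pi(W))\subset \pi(W)$, I would use a fiber-product argument. Let $X':=X\times_Z Z$ be formed using $\pi\colon X\to Z$ on the left and $h\colon Z\to Z$ on the right, with projections $p_1\colon X'\to X$ and $p_2\colon X'\to Z$. The commutativity $\pi\circ f=h\circ\pi$ yields a canonical morphism $\phi\colon X\to X'$ defined by $\phi(x)=(f(x),\pi(x))$ and satisfying $p_1\phi=f$, $p_2\phi=\pi$. Given $z\in h^{-1}(\pi(W))$, choose $w\in W$ with $\pi(w)=h(z)$; then $(w,z)$ lies in $X'$. If I can find $x\in X$ with $\phi(x)=(w,z)$, then $x\in f^{-1}(w)\subset f^{-1}(W)=W$ and $z=\pi(x)\in \pi(W)$, completing the proof.

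The crux is therefore the surjectivity of $\phi$, which I would deduce from the irreducibility of $X'$: then $\phi(X)$, being a closed irreducible subvariety of dimension $\dim X=\dim X'$, must equal $X'$. This is where the algebraic fiber space hypothesis on $\pi$ enters crucially. Because $X_\eta$ is geometrically integral, $k(Z)$ is algebraically closed in $k(X)$. Writing $K=k(Z)$ and $L=k(Z)$ viewed as a degree-$\deg h$ separable extension of $K$ via $h^{*}$, this algebraic closedness forces every $K$-irreducible polynomial to remain irreducible over $k(X)$, so $k(X)\otimes_K L$ is a field (compare Remark \ref{maincompfunc}, which identifies this field with the function field of the main component). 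Hence the generic fiber of $p_1\colon X'\to X$ is a single scheme-theoretic point, giving a unique irreducible component of $X'$ dominating $X$; combined with the finiteness of $p_1$ and the torsion-freeness of $h_{*}\mathcal{O}_Z$ (from normality of $Z$), this upgrades to $X'$ being irreducible. The main obstacle is precisely this irreducibility of the base change $X'$; once it is in hand, applying $\phi$ surjectivity to the point $(w,z)$ concludes the proof formally.
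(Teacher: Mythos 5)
Your forward inclusion $\pi(W)\subset h^{-1}(\pi(W))$ is correct and matches the paper. For the reverse inclusion, the reduction you make is also sound: the desired conclusion is equivalent to surjectivity of the graph morphism $\phi\colon X\to X\times_{Z}Z$, and since $\phi$ is finite (because $p_{1}\circ\phi=f$ is), $\phi(X)$ is a closed irreducible subset of full dimension, so surjectivity would indeed follow from irreducibility of the fibre product. The identification of the unique component dominating $X$ via $k(X)\otimes_{k(Z)}k(Z)$ being a field is also correct and matches Remark~\ref{maincompfunc}.

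The gap is in the final step, where you claim that finiteness of $p_{1}$ together with torsion-freeness of $h_{*}\mathcal{O}_{Z}$ upgrades ``unique dominating component'' to ``irreducible.'' The relevant $\mathcal{O}_{X}$-algebra is $\pi^{*}(h_{*}\mathcal{O}_{Z})$, and $\pi$ is \emph{not} flat in general, so torsion-freeness of $h_{*}\mathcal{O}_{Z}$ over $\mathcal{O}_{Z}$ does not pass to $\pi^{*}(h_{*}\mathcal{O}_{Z})$ over $\mathcal{O}_{X}$. (For a finite torsion-free $B$-module $M$ that is not flat, say $M=(s,t)\subset B=k[s,t]$, the pullback $M\otimes_{B}B/(s)$ already has a nonzero torsion element.) Consequently nothing you have written rules out irreducible components of $X\times_{Z}Z$ lying over a proper closed subset of $X$, which is precisely what would make $\phi$ fail to be surjective. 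Irreducibility of $X\times_{Z}Z$ is in fact true in this setting, but it is essentially \emph{equivalent} to the statement being proved, so one needs an independent argument rather than this one.

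The paper sidesteps the fibre-product entirely. Since $X$ is normal and $f$ is finite, $f$ is open (going-down). Restricting $f$ to $f^{-1}(\pi^{-1}(h(z)))=\pi^{-1}(h^{-1}(h(z)))$, the image $f(\pi^{-1}(z))$ is open in $\pi^{-1}(h(z))$ (because $\pi^{-1}(z)$ is open in $\pi^{-1}(h^{-1}(h(z)))$, the latter being a finite disjoint union of closed fibres), and it is closed because $f$ is finite. The fibre $\pi^{-1}(h(z))$ is connected because $\pi$ is an algebraic fiber space, so $f(\pi^{-1}(z))=\pi^{-1}(h(z))$. Since $h(z)\in\pi(W)$, the set $\pi^{-1}(h(z))\cap W$ is nonempty, so $\pi^{-1}(z)\cap f^{-1}(W)=\pi^{-1}(z)\cap W\neq\emptyset$, giving $z\in\pi(W)$. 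This argument is shorter, uses only openness of the \emph{finite} map $f$ (which is automatic from normality of $X$) rather than any delicate property of $\pi$, and avoids the irreducibility question altogether. You could repair your approach by proving $f(\pi^{-1}(z))=\pi^{-1}(h(z))$ directly as the paper does, but at that point the fibre-product packaging adds nothing.
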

\begin{proof}
Since $h(\pi(W))=\pi(f(W))\subset \pi(W)$, we have $\pi(W)\subset h^{-1}(\pi(W))$.
Take a closed point $z\in h^{-1}(\pi(W))$.
Since $X$ is a normal variety and $f$ is a finite morphism, $f$ is an open map by going down.
Therefore, the induced map $\pi^{-1}(h^{-1}(h(z))) \longrightarrow \pi^{-1}(h(z))$ is also an open map.
Thus $f(\pi^{-1}(z))$ is an open and closed subset of $\pi^{-1}(h(z))$ and therefore $f(\pi^{-1}(z))=\pi^{-1}(h(z))$.
Since $\pi^{-1}(h(z))\cap W \neq \emptyset$, we get $\emptyset \neq \pi^{-1}(z)\cap f^{-1}(W) =  \pi^{-1}(z)\cap W$, which means
$z\in \pi(W)$.
\end{proof}

Next, we recall the following basic property of Mori fiber spaces.

\begin{lem}\label{irreducible fiber lemma}
Let $(X, \Delta)$ be a klt pair where $X$ is a $\Q$-factorial normal projective variety and $ \Delta$ is an effective $\Q$-divisor.
Let $\pi \colon X \longrightarrow Y$ be a $(K_{X}+ \Delta)$-negative extremal ray contraction of fiber type.
Then for any prime divisor $E$ on $Y$, $\pi^{-1}(E)$ is an irreducible codimension one closed subset of $X$.
In particular, any prime divisor $F$ on $X$ satisfies $\mathrm{codim}(\pi(F)) \leq 1 $.
\end{lem}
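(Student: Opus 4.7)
The plan is to deduce the ``in particular'' clause from the main claim and to prove the main claim using two standard properties of such $\pi$: (a) $Y$ is $\Q$-factorial and $\pi$ is equidimensional of relative dimension $r:=\dim X-\dim Y$ (standard for extremal fiber-type contractions from $\Q$-factorial klt varieties); and (b) the relative base-point-free theorem, i.e., any $\Q$-Cartier divisor $D$ on $X$ with $D\cdot R=0$ (where $R$ is the contracted extremal ray) satisfies $D\sim_{\Q}\pi^{*}D'$ for some $\Q$-Cartier $D'$ on $Y$. From (a), the ``in particular'' clause is immediate: for a prime divisor $F\subset X$, the inclusion $F\subseteq\pi^{-1}(\pi(F))$ gives $\dim X-1=\dim F\leq\dim\pi^{-1}(\pi(F))=r+\dim\pi(F)$, whence $\mathrm{codim}(\pi(F))\leq 1$. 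Moreover, since $Y$ is $\Q$-factorial, $\pi^{*}E$ is $\Q$-Cartier with support equal to $\pi^{-1}(E)$, so $\pi^{-1}(E)$ has pure codimension one and decomposes as $\pi^{-1}(E)=F_{1}\cup\cdots\cup F_{k}$ with each $F_{i}$ a prime divisor of $X$.

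Each $F_{i}$ is $\Q$-Cartier (by $\Q$-factoriality of $X$) and $\pi$-vertical, so $F_{i}\cdot R=0$ (a curve in $X_{y}$ for $y\in Y\setminus E$ has class in $R$ and is disjoint from $F_{i}$); by (b), $F_{i}\sim_{\Q}\pi^{*}D_{i}$ for some $\Q$-Cartier $D_{i}$ on $Y$. Suppose for contradiction that $k\geq 2$. Then $F_{1}-F_{2}\sim_{\Q}\pi^{*}(D_{1}-D_{2})$, so there exist $N\in\Z_{>0}$ and $f\in k(X)^{*}$ with $(f)=N(F_{1}-F_{2})-N\pi^{*}(D_{1}-D_{2})$. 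For a general $y\in Y\setminus(E\cup\Supp(D_{1}-D_{2}))$, the fiber $X_{y}$ is disjoint from $\Supp(f)$, so $f|_{X_{y}}$ is a nonzero constant (since $X_{y}$ is projective and geometrically integral in characteristic zero). Letting $y$ vary, $f=\pi^{*}g$ for some $g\in k(Y)^{*}$, hence $(f)=\pi^{*}((g))$.

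Substituting, $F_{1}-F_{2}=\pi^{*}D$ for the $\Q$-Cartier divisor $D:=D_{1}-D_{2}+(1/N)(g)$ on $Y$. A support comparison forces $\Supp(D)\subseteq E$: any prime component $E'\neq E$ of $\Supp(D)$ would contribute the entire divisor $\pi^{-1}(E')$ to $\Supp(\pi^{*}D)$, but $\pi^{-1}(E')$ meets $F_{1}\cup F_{2}$ only in codimension $\geq 2$. Hence $D=cE$ for some $c\in\Q\setminus\{0\}$ (nonzero because $F_{1}\neq F_{2}$). Writing $\pi^{*}E=\sum a_{i}F_{i}$ with $a_{i}>0$, the identity $F_{1}-F_{2}=c\sum a_{i}F_{i}$ yields $ca_{1}=1$ and $ca_{2}=-1$, impossible when $a_{1},a_{2}>0$. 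Hence $k=1$. The main obstacle is the descent step $f=\pi^{*}g$, which rests on projectivity and geometric integrality of general fibers of $\pi$, standard for algebraic fiber spaces in characteristic zero.
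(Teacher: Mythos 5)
Your proof of the main claim (irreducibility of $\pi^{-1}(E)$) is correct but follows a genuinely different route from the paper's. The paper argues with bare hands: if $\pi^{-1}(E)$ has two components $F$, $F'$ that meet, it produces a curve $C$ in a fiber over a point of $E$ meeting but not contained in $F$ (so $(F\cdot C)>0$), whereas a curve $C'$ in a fiber over $Y\setminus E$ has $(F\cdot C')=0$; since $\rho(X/Y)=1$ forces $[C]$ and $[C']$ to be proportional in $N_{1}(X)_{\R}$, this is a contradiction. You instead apply the contraction theorem to write each vertical component $F_{i}$ as $\pi^{*}D_{i}$ up to $\Q$-linear equivalence, descend a rational function to $Y$ to upgrade this to an honest equality $F_{1}-F_{2}=\pi^{*}D$, and derive a contradiction from the sign of the coefficients in $\pi^{*}E=\sum a_{i}F_{i}$. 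Your route uses heavier machinery (relative base-point-freeness, descent of functions along the algebraic fiber space, $\Q$-factoriality of $Y$) but is equally valid and exposes a different structural reason for irreducibility.

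There is, however, a gap in your derivation of the ``in particular'' clause. You assert that $\pi$ is equidimensional and call this ``standard for extremal fiber-type contractions from $\Q$-factorial klt varieties.'' This is not a standard fact and, to my knowledge, is false in general: for $\dim Y\geq 2$ a Mori fiber space can have fibers of dimension $>r$ over loci of codimension $\geq 2$ in $Y$ (the argument you gave rules out jumps only over codimension-one loci). Fortunately, what you actually need is strictly weaker than equidimensionality and follows at once from the part you have already proved, which is exactly how the paper proceeds: if $F$ is a prime divisor on $X$ with $\mathrm{codim}_{Y}(\pi(F))\geq 2$, choose a prime divisor $E\subset Y$ containing $\pi(F)$; then $F\subseteq\pi^{-1}(E)$, and since $\pi^{-1}(E)$ is irreducible of codimension one, $F=\pi^{-1}(E)$, forcing $\pi(F)=E$, a contradiction. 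You should replace the appeal to equidimensionality by this deduction (or supply a precise reference if you insist on the stronger claim).
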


\begin{proof}
Suppose that $\pi^{-1}(E)$ has two components.
Then we can take irreducible components $F$ and $F'$ such that $\mathrm{codim}(F) = 1$, $\pi(F)=E$ and $F \cap F' \neq \emptyset$. 
Take a closed point $y \in \pi(F' \backslash F)$, then there exist closed points $x \in F$ and $x' \in F' \backslash F $ such that $\pi(x)=\pi(x')=y$.
Since $\pi$ has connected fibers, we can take a connected curve containing $x$ and $x'$, so we can take an integral curve $C$ such that $C \cap F \neq \emptyset $ and C is not contained in $F$. 
In particular, the intersection number of $C$ and $F$ is positive.
However, taking a closed point $y' \in Y \backslash E$ and an integral curve $C' \subset \pi^{-1}(y')$ as sets, we have $(C'\cdot F) = 0 $.
Since $\pi$ is a $(K_{X}+ \Delta)$-Mori fiber space, $C$ and $C'$ are proportional in $N_{1}(X)_{\R}$. Hence this is a contradiction.

Next, we assume that there exists a prime divisor $F$ on $X$ such that $\mathrm{codim}(\pi(F)) \geq 2$.
Then we can take a prime divisor $E$ on $Y$ containing $\pi(F)$.
Hence we have $F \subset \pi^{-1}(E)$ as sets. By the first assertion, $F = \pi^{-1}(E)$ as sets.
In particular, $\pi(F) = E$ and it is contradiction.

\end{proof}

\begin{defn}
Let $X$ be a normal projective variety and $\Delta$ an effective $\Q$-Weil divisor on $X$.
We say that $\Delta$ has standard coefficients if for any prime divisor $E$ on $X$, there exists a positive integer $m$ such that $\ord_{E}(\Delta) = \frac{m-1}{m}$.

\end{defn}

Next, we consider the index-1 cover of log trivial pairs.
Index-1 cover is not quasi-\'etale when the boundary is not $\Z$-Weil.
However, if the boundary has standard coefficients,
we can compute the ramification indices and conclude that the canonical divisor of index-1 cover is a principal divisor.

\begin{lem}\label{index-one covering lem}
Let $X$ be a normal projective variety and $\Delta$ an effective $\Q$-Weil divisor on $X$ such that $K_X + \Delta $ is $\Q$-linearly equivalent to $0$.
Then there exists a finite surjective morphism $\mu \colon \widetilde{X} \longrightarrow X$ from a normal projective variety such that the following conditions hold:
\begin{itemize}
    \item $\mu^*(K_X +\Delta)$ is a principal divisor: $\mu^*(K_X+\Delta) \sim 0$;
    \item if  $\mu' \colon X' \longrightarrow X$ is a finite surjective morphism from a normal projective variety such that $\mu'^*(K_X+\Delta)$ is a principal divisor, then $\mu'$ factors through $\mu$.
\end{itemize}
Furthermore, if $\Delta$ has standard coefficients, then $R_{\mu} = \mu^*(\Delta)$.
In particular $K_{\widetilde{X}}$ is a principal divisor.

\end{lem}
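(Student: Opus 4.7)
The plan is to construct $\mu$ as a classical index-one Kummer cover and verify the ramification identity by a local DVR computation. First, choose a positive integer $N$ with $N\Delta$ integral and $N(K_X+\Delta) \sim 0$; this is possible since $K_X + \Delta \sim_{\Q} 0$, and automatically $m_E \mid N$ for every prime divisor $E$ (setting $m_E = 1$ when $E \not\subset \Supp \Delta$). Fix $f \in k(X)^{*}$ with $\Div(f) = N(K_X + \Delta)$ and let $\mu \colon \widetilde{X} \longrightarrow X$ be the normalization of $X$ in the finite field extension $k(X)(f^{1/N})$. Then $\widetilde{X}$ is normal and, being finite over $X$, projective. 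The identity of $\Q$-Weil divisors
\[
N \cdot \mu^{*}(K_X + \Delta) = \mu^{*}\Div(f) = \Div(\mu^{*}f) = N \cdot \Div(f^{1/N})
\]
yields $\mu^{*}(K_X + \Delta) = \Div(f^{1/N}) \sim 0$.

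For the universal property, suppose $\mu' \colon X' \longrightarrow X$ is finite surjective from a normal projective variety with $(\mu')^{*}(K_X + \Delta) = \Div(h)$ principal. Then $\Div((\mu')^{*}f) = N\,\Div(h) = \Div(h^{N})$, so $(\mu')^{*}f/h^{N}$ is a global unit on $X'$ and therefore a nonzero constant $c \in \QQ^{*}$. Choosing any $N$-th root of $c$ shows that $(\mu')^{*}f$ is an $N$-th power in $k(X')$, giving an embedding $k(X)(f^{1/N}) \hookrightarrow k(X')$; the universal property of normalization then factors $\mu'$ uniquely through $\mu$.

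Now suppose $\Delta$ has standard coefficients, $\ord_{E}\Delta = (m_E - 1)/m_E$. At each prime divisor $E$ on $X$, work in $R = \mathcal{O}_{X, \eta_{E}}$ with uniformizer $\pi$ and write $f = u\pi^{a}$ with $u$ a unit and $a = \ord_E(f)$. Passing to the strict henselization $R^{\mathrm{sh}}$ to make $u$ an $N$-th power reduces the computation to $R^{\mathrm{sh}}[s]/(s^N - \pi^a)$, which factors as $\prod_{\zeta^d = 1}(s^{N/d} - \zeta\pi^{a/d})$ with $d = \gcd(N, a)$; on each factor the substitution $\tau = s^{q}\pi^{p}$ with $p(N/d) + q(a/d) = 1$ produces a uniformizer satisfying $\tau^{N/d} = \zeta^{q}\pi$, so every prime of $\widetilde{X}$ above $E$ has ramification index $N/d$. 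Since $m_E \mid N$ and
\[
a = N\,\ord_E(K_X) + N\cdot\tfrac{m_E - 1}{m_E} \equiv -\tfrac{N}{m_E} \pmod{N},
\]
we obtain $d = \gcd(N, a) = N/m_E$, hence the ramification index of every $\widetilde{E}$ over $E$ equals $m_E$. Each $\widetilde{E}$ then contributes coefficient $m_E - 1$ to both $R_\mu$ (as $e_{\widetilde{E}} - 1$) and $\mu^{*}\Delta$ (as $\tfrac{m_E-1}{m_E}\cdot m_E$), proving $R_\mu = \mu^{*}\Delta$. Combining with $K_{\widetilde{X}} = \mu^{*}K_X + R_\mu$ yields $K_{\widetilde{X}} = \mu^{*}(K_X + \Delta) \sim 0$, so $K_{\widetilde{X}}$ is principal.

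The principal technical hurdle is the local ramification computation, where the unit $u$ in $f = u\pi^{a}$ need not admit an $N$-th root in the residue field $\kappa(E)$, so the cover is not literally $t^{N} = \pi^{a}$ over $R$. I handle this by base-changing to $R^{\mathrm{sh}}$, where $u$ becomes an $N$-th power and the substitution above makes $\tau$ a genuine uniformizer; since ramification indices are preserved under the faithfully flat map $R \to R^{\mathrm{sh}}$, the result descends back to $R$.
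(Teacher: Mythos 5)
Your construction is essentially the paper's: both build the index-one Kummer cover $\widetilde{X}=\mathrm{normalization\ of}\ X$ in $k(X)(f^{1/N})$, verify that $\mu^*(K_X+\Delta)$ becomes principal by pulling back the defining function, deduce the universal property by showing any competitor makes the pulled-back function an $N$-th power (the difference being a global unit, hence constant), and finally compute ramification over each prime divisor $E$ by a local DVR calculation. The only substantive divergence is in the local step: the paper handles the unit $u$ in $\alpha=u\varpi^{b\rho}$ by explicitly adjoining a root via the \'etale intermediate ring $R'=R[Y]/(Y^b-u)$, whereas you pass once and for all to the strict henselization $R^{\mathrm{sh}}$, where every unit becomes an $N$-th power, and then perform a Bezout substitution $\tau=s^q\pi^p$ to produce a uniformizer. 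Both are standard and correct; yours is a bit more conceptual (one clean base change, then descend ramification indices along $R\to R^{\mathrm{sh}}$), the paper's is more elementary and self-contained.

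Two small points worth tightening. First, your $N$ need not be the minimal integer with $N(K_X+\Delta)\sim 0$; the construction is nonetheless independent of this choice (any two such $N,f$ give the same field $k(X)(f^{1/N})=k(X)(\alpha^{1/m_0})$, with $m_0$ the minimal index, because of the abundance of roots of unity and roots of constants in the base field), and your universal property argument does verify this implicitly, but it would be cleaner to remark it. Second, the exponent $a=\ord_E(f)$ can be negative, in which case $s^{N}-\pi^{a}$ is not a polynomial over $R^{\mathrm{sh}}$; one should either replace $s$ by $s^{-1}$ to reduce to $a>0$, or note that the Bezout uniformizer $\tau=s^q\pi^p$ satisfying $\tau^{N/d}=\zeta^q\pi$ makes $R^{\mathrm{sh}}[\tau]$ a DVR with the correct fraction field regardless of the sign of $a$. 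Neither point is a real gap.
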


\begin{proof}
Let $m_0 = \min\{m\  |\  m(K_X+\Delta)\sim 0\}$ and take a non-zero rational section $\alpha \in k(X) =K$ such that $\mathrm{div}_{X}(\alpha)=m_0(K_X+\Delta)$.
Let $L=K[T]/(T^{m_0}-\alpha)$.
Note that $L$ is a field.
Let $\mu \colon \widetilde{X} \longrightarrow X$ be the normalization of $X$ in $L$.
Then we have
\[
\mathrm{div}_{ \widetilde{X}}(T)=\frac{1}{m_0}\mathrm{div}_{ \widetilde{X}}(\alpha) = \mu^*(K_X+\Delta),
\]
so $\mu^*(K_X+\Delta)$ is a principal divisor.
Moreover, let $\mu' \colon X' \longrightarrow X$ be a finite surjective morphism from a normal projective variety such that $\mu'^*(K_X+\Delta)$ is a principal divisor.
Then there exists a non-zero rational function $\beta \in K(X')=L'$ such that $\mathrm{div}_{X'}(\beta)=\mu'^*(K_X+\Delta)$. 
In particular, we have $m_0 \mathrm{div}_{X'}(\beta) = \mathrm{div}_{X'}(\alpha)$.
Since the base field is algebraically closed, we may assume that $\beta^{m_0}=\alpha$.
It means that there exists an injective $K$-algebra homomorphism from $L$ to $L'$,
so $\mu'$ factors through $\mu$.

Next we assume that $\Delta$ has standard coefficients.
Let $E$ be a prime divisor on $X$, $m$ a positive integer, $a$ an integer such that $\ord_E(K_X+\Delta)=\frac{m-1}{m}+a$.
Let $(R,(\varpi))$ be the DVR associated to $E$ and $S$ the normalization of $R$ in $L$.
Then it is enough to show that the order of $\varpi$ at any maximal ideal of $S$ is equal to $m$.
Since the order of $\alpha$ along $E$ is equal to $m_0(\frac{m-1}{m}+a)$, there exists an unit $u$ in $R$ such that
\[
\alpha=u \varpi^{m_0(\frac{m-1}{m}+a)}.
\]
Since every coefficients of $\mathrm{div}_{X}(\alpha)$ are integer, there exists a positive integer $b$ such that $mb=m_0$.
Hence if we set $\rho=m-1+ma$, then 
\[
\alpha=u \varpi^{b\rho}.
\]
Now, we have
\[
(\frac{T^m}{\varpi^{\rho}})^b = \frac{\alpha}{\varpi^{b\rho}} = u,
\]
so $S$ contains $\frac{T^m}{\varpi^{\rho}}$.
In particular, $R \hookrightarrow S$ factors through $R'=R[Y]/(Y^b-u)$.
Since $K'=K[Y]/(Y^b-u)$ is a field and $R' \longrightarrow K'$ is injective,
$R'$ is an integral domain and satisfies $R \subset R' \subset S$.
Since $R'$ is \'etale over $R$, $\varpi$ is an uniformizer of $R'_{i}=R'_{\mathfrak{p}_i}$ for any maximal ideal $\mathfrak{p_i}$ of $R'$.
We set $S_i = S \otimes_{R'} R'_i \subset L$.
Now, we have
\[
(\frac{\varpi^{1+a}}{T})^m=\frac{\varpi^\rho}{T^m} \cdot \varpi = Y^{-1}\varpi.
\]
Since $Y$ is an unit in $R'$, $R_i' \hookrightarrow S_i$ factors through $R''_i=R'_i[Z]/(Z^m-Y^{-1}\varpi)$.
Because $K''=K'[Z]/(Z^m-Y^{-1}\varpi)$ is a field and $R''_i \longrightarrow K''$ is injective,
$R''_i$ is an integral domain and satisfies $R'_i \subset R''_i \subset S_i$.
Since we have
\[
(\varpi^{1+a}Z^{-1})^{m_0}=(\varpi^{(1+a)m}Z^{-m})^b=(\varpi^{\rho}Y)^b=\alpha,
\]
the quotient field of $R''_i$ is $L$.
Furthermore, since $(Z)$ is the unique maximal ideal of $R''_i$, we obtain $R''_i=S_i$ and $\ord_{S_i}(\varpi)=m$.
Therefore we obtain the last assertion.
\end{proof}

The following lemma is well-known to experts.

\begin{lem}\label{canonical bundle formula lem}
Let $\pi \colon X \longrightarrow Y$ be an algebraic fiber space where $X, Y$ are normal $\Q$-Gorenstein projective varieties.
Assume that $-K_X$ is $\pi$-ample and $X$ is klt.
Then $Y$ is also klt.
\end{lem}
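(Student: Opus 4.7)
The strategy is to reduce to a log Calabi--Yau fibration and then invoke the canonical bundle formula of Ambro and Kawamata, which is the standard mechanism for transferring klt singularities from the total space to the base of a Fano-type contraction.

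First I would construct an effective $\Q$-divisor $\Delta$ on $X$ such that $(X, \Delta)$ is klt and $K_X + \Delta \sim_{\Q, \pi} 0$. Since $-K_X$ is $\pi$-ample, I may fix an ample divisor $H$ on $Y$ and an integer $m \gg 0$ so that $-mK_X + \pi^{*}H$ is very ample on $X$. A general member $D$ of the associated linear system is a smooth prime divisor by Bertini, and $\Delta := \frac{1}{m}D$ is then an effective $\Q$-divisor with $K_X + \Delta \sim_{\Q} \frac{1}{m}\pi^{*}H$. In particular $K_X + \Delta \sim_{\Q, \pi} 0$, and by choosing $m$ large enough the coefficient $1/m$ is small enough to guarantee that $(X, \Delta)$ remains klt.

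Next I would apply the canonical bundle formula for klt-trivial fibrations (Ambro, Kawamata, Fujino--Gongyo) to $\pi\colon (X, \Delta) \longrightarrow Y$: there exist an effective $\Q$-divisor $B_Y$ (the discriminant part) and a nef $\Q$-divisor $M_Y$ (the moduli part) on $Y$ such that
\[
K_X + \Delta \sim_{\Q} \pi^{*}(K_Y + B_Y + M_Y),
\]
and the generalized pair $(Y, B_Y + M_Y)$ is generalized klt. Because $K_Y$ is $\Q$-Cartier by hypothesis and the extra contribution $B_Y + M_Y$ is ``nonnegative'' in the discrepancy sense ($B_Y \geq 0$ and $M_Y$ nef), the discrepancy of any divisorial valuation over $Y$ computed against $(Y, 0)$ is at least the corresponding discrepancy against $(Y, B_Y + M_Y)$, hence strictly greater than $-1$. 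Therefore $Y$ has only klt singularities.

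The main obstacle is invoking the canonical bundle formula in the required generality: one needs the full Ambro--Kawamata statement producing a generalized klt structure on the base, not merely a numerical statement about $K_Y$. The Bertini reduction of Step 1 and the descent from generalized klt with nef moduli to klt-ness of $(Y, 0)$ are both routine once that deep input is taken for granted, which is presumably why the lemma is described as ``well-known to experts.''
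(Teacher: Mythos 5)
Your proposal is correct and follows essentially the same strategy as the paper: construct an effective $\Q$-divisor $\Delta$ on $X$ with $(X,\Delta)$ klt and $K_X+\Delta \sim_{\Q} \pi^*H$, then apply the canonical bundle formula to transfer klt-ness to $Y$. The paper cites \cite[Theorem 4.1]{ambro} in the form that directly produces an effective boundary $B_Y$ with $(Y,B_Y)$ klt (absorbing the moduli part into the boundary), whereas you stop at the generalized-pair output $(Y, B_Y + M_Y)$ and then argue that dropping the nonnegative discriminant and the nef moduli b-divisor only increases discrepancies; both routes reach the conclusion. One small imprecision worth noting in your write-up: the moduli part $M_Y$ is not in general nef on $Y$ itself, only its trace on a sufficiently high birational model is nef (it is a nef b-divisor), so the discrepancy comparison should be carried out on such a model (where the negativity lemma shows $\phi^*M_Y - M_{Y'}$ is effective and $\phi$-exceptional); this does not affect the validity of the argument.
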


\begin{proof}
There exists an ample Cartier divisor $H$ on $Y$ such that $-K_X+\pi^*H$ is an ample $\Q$-Cartier divisor.
Then for a general effective $\Q$-Cartier divisor $B$ that is linearly equivalent to $-K_X+\pi^*H$, $(X,B)$ is a klt pair and $K_X+B \sim_{\Q} \pi^*H$.
By Ambro's canonical bundle formula (\cite[Theorem 4.1]{ambro}), we find an effective $\Q$-Weil divisor $B_Y$ such that $(Y,B_Y)$ is a klt $\Q$-Gorenstein pair.
In particular, $Y$ is klt.
\end{proof}

\begin{proof}[Proof of Theorem \ref{covlem}]
By Lemma \ref{irreducible fiber lemma}, we can define a positive integer $m_E$ for any prime divisor $E$ on $Y$ so that $\pi^*E = m_E F$ for some prime divisor $F$ on $X$.
We set $\Delta=\sum \frac{m_E-1}{m_E}E$ as a $\Q$-Weil divisor, where the sum runs over all the prime divisors on $Y$.
Note that this is a finite sum because a general fiber of $\pi$ is reduced. 
Thus $\Delta$ is a well-difined $\Q$-Weil divisor with standard coefficients.
First we prove that $K_Y+\Delta \sim_{\Q} 0$ and $K_Y+\Delta \sim g^*(K_Y+\Delta)$.
Let $E$ be a prime divisor on $Y$ and we set $g^*E=a_1 E_1+ \cdots + a_r E_r$, where $a_1 ,\ldots , a_r$ are positive integers and $E_1, \ldots E_r$ are prime divisors on $Y$.
For every $i$, we can write $\pi^*(E_i)=m_{E_i}F_i$ for some prime divisor $F_{i}$ on $X$.
Since $\pi^*g^*E=f^*\pi^*E$, we have
\[
a_1 m_{E_1} F_1 + \cdots +a_r m_{E_r} F_r = m_E(F_1+ \cdots F_r)
\]
and $a_i m_{E_i}=m_E$ for all $i$.
We remark that $f^*F = F_1 + \cdots F_r$ because any component of $R_f$ is horizontal.
Using this equality, we have
\[
\ord_{E_i}(g^*\Delta-\Delta)=a_i \frac{m_E-1}{m_E}-\frac{m_{E_i}-1}{m_{E_i}} =\frac{m_E-m_{E_i}}{m_{E_i}} = a_i-1 = \ord_{E_i}(R_g).
\]
Hence we obtain $g^*(K_Y+\Delta)\sim K_Y+\Delta$.
Since $g$ is an int-amplified endomorphism, $K_Y +\Delta$ is numerically equivalent to $0$.
Furthermore by \cite[Lemma 2.11]{brou-hor}, $g$ induces the endomorphism of the log canonical model of $(Y,\Delta)$.
By the same argument of the proof of \cite[Theorem 1.4]{brou-hor}, $(Y,\Delta)$ is a log canonical pair.
It means that $(Y,\Delta)$ is a log Calabi-Yau pair, and by \cite[Theorem 1.2]{gong}, $K_Y +\Delta$ is $\Q$-linearly equivalent to $0$.

Applying Lemma \ref{index-one covering lem} to the pair $(Y,\Delta)$, 
we obtain the finite covering $\mu_{Y_1} \colon Y_1 \longrightarrow Y$ as in Lemma \ref{index-one covering lem}.
Since 
\[
\mu_{Y_1}^*g^*(K_Y+\Delta) \sim \mu_{Y_1}^*(K_Y+\Delta) \sim 0,
\]
$g \circ \mu_{Y_1}$ factors $\mu_{Y_1}$.
It means that there exists $g_1 \colon Y_1 \longrightarrow Y_1$ such that the following diagram commutes:
\[
\xymatrix{
Y_1 \ar[r]^{g_1} \ar[d]_{\mu_{Y_1}} & Y_1 \ar[d]^{\mu_{Y_1}} \\
Y \ar[r]_{g} & Y. 
}
\]
Let $X_1$ be the normalization of the main component of $X \times_Y Y_1$.
By Lemma \ref{main comp}, $X_1$ has an int-amplified endomorphism and we get the the following equivariant commutative diagram:
\[
\xymatrix{
f \acts X \ar@<2.3ex>[d]_{\pi}& \ar[l]_{\mu_{X_1}} X_1 \ar@<-2.3ex>[d]^{\pi_1} \racts f_1 \\
g \acts Y & \ar[l]^{\mu_{Y_1}} Y_1 \racts g_{1}.
}
\]
By Lemma \ref{irreducible fiber lemma}, any prime divisor $E$ on $X$ satisfies $\mathrm{codim}(\pi(E)) \leq 1$.
Since $\Delta$ has standard coefficients, $K_{Y_1}$ is linearly equivalent to $0$, in particular, $g_1$ is quasi-\'etale.
Hence by Lemma \ref{qasi-etale lemma}, $\mu_{X_1}$ is quasi-\'etale.
It implies that $X_1$ is $\Q$-Gorenstein klt and $-K_{X_1}$ is $\pi_1$-ample.
Therefore by Lemma \ref{canonical bundle formula lem}, $Y_1$ is also klt.
By \cite[Theorem 5.2]{meng}, there exists the following commutative diagram:
\[
\xymatrix{
A \ar[r]^{g_A} \ar[d]_{\mu_{Y_2}} & A \ar[d]^{\mu_{Y_2}} \\
Y_1 \ar[r]_{g_1} & Y_1, 
}
\]
where $A$ is an abelian variety, $\mu_{Y_2}$ is a finite surjective morphism and $g_A$ is an int-amplified endomorphism.
Let $\widetilde{X}$ be the normalization of the main component of $X \times_Y A$.
Then $\widetilde{X}$ has an int-amplified endormophism by Lemma \ref{main comp}, and the following diagram commutes:
\[
\xymatrix{
f \acts X \ar@<2.3ex>[d]_{\pi}& \ar[l]_{\mu_{X}} \widetilde{X} \ar@<-2.3ex>[d]^{\widetilde{\pi}} \racts \widetilde{f} \\
g \acts Y & \ar[l]^{\mu_{Y}} A \racts g_A,
}
\]
where $\mu_Y = \mu_{Y_1} \circ \mu_{Y_2}$.
By Lemma \ref{qasi-etale lemma}, $\mu_X$ is quasi-\'etale.

\end{proof}

\subsection{Covering and MMP} 

In this subsection, we prove that such a covering constructed in Theorem \ref{covlem} ascends along the process of equivariant MMP.

\begin{defn}\label{starcond}
Let $X$ be a normal projective variety and $f \colon X \longrightarrow X$ a surjective endomorphism.
We say $f$ satisfies $(\ast)$ if the following holds.
There exits a following equivariant  commutative  diagram:
\[
\xymatrix{
\widetilde{f} \acts \widetilde{X} \ar@<2.3ex>[d]_{\mu_{X}} \ar[r]^{ \widetilde{\pi}} & A \ar@<-2.3ex>[d]^{\mu_{Z}} \racts g_{A} \\
f \acts X \ar[r]_{\pi} & Z \racts g 
}
\]
where
\begin{enumerate}
\item $Z$ is a normal projective variety of positive dimension, $A$ is an abelian variety; 
\item $\pi$ is an algebraic fiber space, $\mu_{Z}$ is a finite surjective morphism;
\item\label{tilXismaincomp} $ \widetilde{X}$ is the normalization of the main component of $X \times_{Z} A$;
\item $\mu_{X}$ is a finite surjective quasi-\'etale morphism, and $ \widetilde{\pi}$ is an algebraic fiber space
(these properties follow from (\ref{tilXismaincomp}) except the quasi-\'etaleness); 
\item $g, g_{A}, \widetilde{f}$ are surjective endomorphisms.
\end{enumerate}

\end{defn}

\begin{lem}\label{liftlem}
Let $(X, \Delta)$ be a klt pair, where $X$ is a $\Q$-factorial normal projective variety and  $\Delta$ is an effective $\Q$-divisor.
Let $f \colon X \longrightarrow X$ be an int-amplified endomorphism.
Consider the following commutative diagram:
\[
\xymatrix{
X \ar@{-->}[r]^{\pi} \ar[d]_{f} & Y \ar[d]^{g}\\
X \ar@{-->}[r]^{\pi} & Y 
}
\]
where $\pi$ is the divisorial contraction, flip, or fiber type contraction of a $(K_{X}+\Delta)$-negative extremal ray and
$g$ is an endomorphism.
If $g$ satisfies $(\ast)$, then so does $f$.
\end{lem}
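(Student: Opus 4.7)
The plan is to use the diagram witnessing $(\ast)$ for $g$ --- the algebraic fiber space $\pi_Y\colon Y\to Z$, the abelian variety $A$ with finite surjective $\mu_Z\colon A\to Z$, the quasi-\'etale cover $\mu_Y\colon \widetilde Y\to Y$ (with $\widetilde Y$ the normalization of the main component of $Y\times_Z A$), and the equivariant endomorphisms $g$ on $Z$, $g_A$ on $A$, $\widetilde g$ on $\widetilde Y$ --- and from them to construct the analogous data for $f$. The construction splits by the nature of the MMP step $\pi$.

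When $\pi\colon X\to Y$ is a morphism (the divisorial and fiber type cases), I would set $\bar\pi:=\pi_Y\circ \pi\colon X\to Z$, which is an algebraic fiber space as a composition of such, and take $\widetilde X$ to be the normalization of the main component of $X\times_Z A$. By Lemma \ref{maincomp2} this coincides with the normalization of the main component of $X\times_Y \widetilde Y$, and Lemma \ref{main comp} supplies an equivariant endomorphism $\widetilde f$ on $\widetilde X$ compatible with $\widetilde g$, $g_A$, and $f$. What remains is to verify that $\mu_X\colon \widetilde X\to X$ is quasi-\'etale; here I would invoke Lemma \ref{qasi-etale lemma} applied to the square formed by $\pi$ and $\mu_Y$. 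Its hypothesis (a) holds because any surjective endomorphism of an abelian variety is the composition of an isogeny with a translation (hence \'etale), which transfers to $\widetilde g$. For a fiber type contraction, hypothesis (b) is exactly Lemma \ref{irreducible fiber lemma}, and hypothesis (c) can be arranged after replacing $f$ by a suitable iterate using Theorem \ref{equiv-thm}. For a divisorial contraction with exceptional divisor $E_0$ one uses that, after passing to an iterate, $E_0$ is totally invariant under $f$ and hence $\pi(E_0)$ is totally invariant under $g$ by Lemma \ref{totinvim}; combined with the fact that the branch locus of $\mu_Y$ is totally invariant under $g$ and that $g_A$ is \'etale, one rules out ramification of $\mu_X$ along $E_0$.

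For the flip case $\pi\colon X\dashrightarrow Y$ is an isomorphism in codimension one, so $k(X)=k(Y)$ and consequently $k(X)\otimes_{k(Z)}k(A)=k(\widetilde Y)$. I would define $\widetilde X$ as the normalization of $X$ in $k(\widetilde Y)$ (cf.\ Remark \ref{maincompfunc}). Quasi-\'etaleness of $\mu_X$ is then automatic: codimension-one points of $X$ and $Y$ are in bijection and the corresponding local rings agree, while the field extension adjoined is precisely the one defining $\mu_Y$, which is quasi-\'etale by hypothesis. The rational map $\widetilde X\dashrightarrow A$ obtained from $\widetilde Y\to A$ under this codimension-one identification extends to a morphism $\widetilde\pi\colon\widetilde X\to A$ by the classical theorem that any rational map from a normal variety to an abelian variety is a morphism. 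The endomorphism $\widetilde f$ is produced by the normalization-in-a-field-extension construction applied equivariantly to $f$.

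The principal obstacle I anticipate is in the flip case: producing the algebraic fiber space $\bar\pi\colon X\to Z$ required by $(\ast)$, since only a rational map $X\dashrightarrow Z$ is directly visible (the composition $X\dashrightarrow Y\to Z$). My proposed remedy is to exploit the flipping contraction $\sigma\colon X\to W$ and the corresponding morphism $Y\to W$, and to show --- using the equivariance of all the maps under int-amplified endomorphisms, together with Theorem \ref{equiv-thm} --- that $\pi_Y\colon Y\to Z$ factors through $Y\to W$; the composition $X\to W\to Z$ then furnishes $\bar\pi$. Verifying this factorization, and that the resulting $\bar\pi$ is an algebraic fiber space (taking the Stein factorization if necessary), is the technical crux, since it is there that the int-amplified hypothesis intervenes to restrict which totally invariant subvarieties can arise as fibers and thus forces the required compatibility between $W$ and $Z$.
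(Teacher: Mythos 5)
Your plan correctly identifies the overall strategy (build $\widetilde X$ as the normalization of the main component of $X\times_Z A$ via Lemma \ref{maincomp2}, then verify quasi-\'etaleness of $\mu_X$), but there are several concrete gaps in the execution, and the route you take through Lemma \ref{qasi-etale lemma} does not go through.

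First, for the fiber type case you propose to apply Lemma \ref{qasi-etale lemma} ``to the square formed by $\pi$ and $\mu_Y$''. That lemma's hypothesis~(a) asks that $\widetilde g$ on $\widetilde Y$ be quasi-\'etale, and your justification is that $g_A$ on $A$ is \'etale; but $\widetilde Y\to A$ is an algebraic fiber space with positive-dimensional fibers, so \'etaleness of $g_A$ says nothing about horizontal ramification of $\widetilde g$. (Take $Y=\P^1\times E$, $Z=A=E$, $\mu_Z=\mathrm{id}$: then $\widetilde g=g$ can ramify along a divisor.) Hypothesis~(c), that every component of $R_f$ dominates $Y$, is also a genuine constraint on $f$ that cannot be ``arranged'' by passing to an iterate via Theorem \ref{equiv-thm} --- iterating does not change whether $R_f$ has vertical components, and Lemma \ref{liftlem} imposes no such hypothesis on $f$. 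The paper's proof does \emph{not} invoke Lemma \ref{qasi-etale lemma} here; it argues directly that any codimension one point $P\in\widetilde X$ has $\widetilde\pi(P)$ of codimension $\le 1$ in $\widetilde Y$ (Lemma \ref{irreducible fiber lemma}), where $\mu_Y$ is \'etale by quasi-\'etaleness, and concludes by base change --- with no assumption on $R_f$ or on $\widetilde g$ at all. Similarly, for the divisorial case the paper's key step is to show the exceptional divisor $E_0$ \emph{dominates} $Z$: total invariance of $E_0$ descends (Lemma \ref{totinvim}) and forces $\mu_Z^{-1}(p(\pi(E_0)))$ to be totally invariant under the \'etale int-amplified $h_A$, hence all of $A$. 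Your invocation of the branch locus of $\mu_Y$ and the \'etaleness of $g_A$ does not obviously deliver this; at best it is a vague paraphrase of the genuine argument.

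Second, in the flip case your ``classical theorem'' that a rational map from a normal variety to an abelian variety is a morphism is false without further hypotheses (for instance, the rational projection from a normal projective cone over an abelian variety to the abelian variety does not extend). The paper uses the fact that $\widetilde X$, being quasi-\'etale over the $\Q$-Gorenstein klt $X$, is itself $\Q$-Gorenstein klt, hence has rational singularities, and then applies a lemma of Meng--Zhang to extend the rational map. You need this intermediate observation. Finally, the ``principal obstacle'' you flag --- producing the morphism $X\to Z$ --- is resolved in the paper not by the roundabout route through the flipping contraction $W$, but by the elementary Lemma \ref{fincovmor}: once $\widetilde X\to A$ is a morphism, the finite cover $\mu_X$ and the commutativity force $p\circ\pi\colon X\dashrightarrow Z$ to be a morphism. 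This is considerably simpler than the factorization you propose, and avoids the need to analyze totally invariant fibers of $W\to Z$.
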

\begin{proof}
Take a diagram as in Definition \ref{starcond} for $g$:
\[
\xymatrix{
\widetilde{g} \acts \widetilde{Y} \ar[r]^{ \widetilde{p}} \ar@<2.1ex>[d]_{\mu_{Y}} & A \racts h_{A} \ar@<-2.3ex>[d]^{\mu_{Z}}\\
g \acts Y \ar[r]_{p} & Z \racts h .
}
\]
Note that $g, h, h_{A}$, and $ \widetilde{g}$ are int-amplified endomorphisms since so is $f$.  

(1) Let $\pi$ be a divisorial contraction.
Let $ \widetilde{X}$ be the normalization of the main component of $X \times_{Y} \widetilde{Y}$ and
consider the following commutative diagram:
\[
\xymatrix{
\widetilde{X} \ar@/^18pt/[rr]^{ \widetilde{\pi}} \ar[r] \ar[rd]_{\mu_{X}} &  X \times_{Y} \widetilde{Y} \ar[d] \ar[r] & \widetilde{Y} \ar[d]_{\mu_{Y}} \ar[r]^{ \widetilde{p}} & A \ar[d]^{\mu_{Z}}\\
 & X \ar[r]_{\pi} & Y \ar[r]_{p} & Z.
}
\]
Since $\mu_{Y}$ is quasi-\'etale, it is clear that $\mu_{X}$ is \'etale at a codimension one point of $ \widetilde{X}$ whose image in $X$ is not the exceptional divisor of $\pi$.
By Lemma \ref{maincomp2}, $ \widetilde{X}$ is the normalization of the main component of 
$X \times_{Z} A$.
In particular, $f, h$, and $h_{A}$ induce an int-amplified endomorphism $ \widetilde{f}$ on $ \widetilde{X}$.
The only thing that we need to show is that $\mu_{X}$ is \'etale at every codimension one point $P \in \widetilde{X}$ over 
the generic point of the exceptional divisor $E$ of $\pi$.
Since $\pi$ is equivariant under $f$ and $g$, we have $f^{-1}(E)=E$ as sets.
By Lemma \ref{totinvim}, we have $h^{-1}(p(\pi(E)))=p(\pi(E))$.
Therefore, $\mu_{Z}^{-1}(p(\pi(E)))$ is also totally invariant under $h_{A}$. 
Since $h_{A}$ is \'etale and int-amplified, $\mu_{Z}^{-1}(p(\pi(E)))=A$ (cf. \cite[Lemma 4.1]{meng}). 
Thus $p(\pi(E))=Z$.
Consider the following commutative diagram:
\[
\xymatrix{
\widetilde{X} \ar[r]^(.4){\nu} \ar[rd]_{\mu_{X}} & X\times_{Z}A \ar[d]_{ \alpha} \ar[r] & A \ar[d]^{\mu_{Z}}  \\
 & X \ar[r]_{p\circ \pi}& Z 
}
\]
where $\nu$ is the normalization of the main component.
Then, $ \alpha$ is \'etale at $\nu(P)$.
Therefore, the main component of $X \times_{Z}A$ is normal at $\nu(P)$.
Thus, $\nu$ is isomorphic around $P$ and $\mu_{X}$ is \'etale at $P$.

(2) Let $\pi$ be a flip. 
Let $\mu_{X} \colon \widetilde{X} \longrightarrow X$ be the normalization of $X$ by $k(X)\simeq k(Y) \subset k( \widetilde{Y})$
where the isomorphism is the one induced by $\pi$ and the inclusion is the one induced by $\mu_{Y}$.
Then we get the following equivariant commutative diagram:
\[
\xymatrix@R=0.1pt{
& \widetilde{g} \ar@{}[ddd]|{\uacts}&\\
&&\\
&&\\
\qquad \widetilde{X} \ar@{-->}[r]^(0.6){ \widetilde{\pi}} \ar@<2.3ex>[ddddd]_{\mu_{X}} &  \widetilde{Y} \ar[r]^(.4){ \widetilde{p}} \ar[ddddd]_{\mu_{Y}} & A \racts h_{A} \ar@<-2.3ex>[ddddd]^{\mu_{Z}} \\
&&\\
&&\\
&&\\
&&\\
f \acts X \ar@{-->}[r]_(.6){\pi} & Y \ar[r]_(.4){p} & Z \racts h .\\
&&\\
&g \ar@{}[uu]|{\dacts}&
}
\]
Since $\pi$ is isomorphic in codimension one, so is $ \widetilde{\pi}$ by construction.
In particular, $\mu_{X}$ is quasi-\'etale.
This implies $ \widetilde{X}$ is $\Q$-Gorenstein klt.
In particular, $ \widetilde{X}$ has at worst rational singularity.
Thus, the rational map $ \widetilde{p}\circ \widetilde{\pi} \colon \widetilde{X} \dashrightarrow A$ is a morphism (cf. \cite[Lemma 5.1]{meng-zhang}). 
By Lemma \ref{fincovmor}, $p\circ \pi$ is also a morphism.
Looking at the function fields, $p\circ \pi$ is an algebraic fiber space and 
$ \widetilde{X}$ is the normalization of the main component of $X\times_{Z} A$.

(3) Let $\pi$ be a fiber type contraction.
Let $ \widetilde{X}$ be the normalization of the main component of $X \times_{Y} \widetilde{Y}$.
Then we get  the following equivariant commutative diagram:
\[
\xymatrix@R=0.1pt{
&& \widetilde{g} \ar@{}[ddd]|{\uacts} &\\
&&&\\
&&&\\
\widetilde{f} \acts \widetilde{X} \ar@/^20pt/[rr]^{ \widetilde{\pi}} \ar[rddddd]_{\mu_{X}} \ar[r]^{\beta} & X\times_{Y} \widetilde{Y} \ar[ddddd]_{ \alpha} \ar[r] & \widetilde{Y} \ar[ddddd]_{\mu_{Y}} \ar[r]^(.4){ \widetilde{p}} & A \racts h_{A} \ar@<-2.3ex>[ddddd]^{\mu_{Z}}\\
&&&\\
&&&\\
&&&\\
&&&\\
 &  X \ar[r]_{\pi} &  Y \ar[r]_(.4){p} & Z \racts h.\\
&&&\\
&f \ar@{}[uu]|{\dacts}&g \ar@{}[uu]|{\dacts}& 
}
\]
By Lemma \ref{maincomp2}, $ \widetilde{X}$ is the normalization of the main component of $X \times_{Z} A$.
It is enough to show that $\mu_{X}$ is quasi-\'etale.
Let $P\in \widetilde{X}$ be a codimension one point.
Since $\pi$ is a contraction of fiber type, $\pi(\mu_{X}(P))$ is either the generic point of $Y$ or
a codimension one point of $Y$ (Lemma \ref{irreducible fiber lemma}).
Therefore, $\mu_{Y}$ is \'etale at $ \widetilde{\pi}(P)$ and thus $ \alpha$ is \'etale at $\beta(P)$.
Thus the main component of $X \times_{Y} \widetilde{Y}$ is normal at $\beta(P)$ and $\beta$ is isomorphic around $P$.
This implies $\mu_{X}$ is \'etale at $P$.
\end{proof}

\begin{lem}\label{fincovmor}
Consider the following commutative diagram:
\[
\xymatrix{
\widetilde{X} \ar[rd]^{g} \ar[d]_{\nu} & \\
X \ar@{-->}[r]_{f} & Y 
}
\]
where $X, Y$, and $ \widetilde{X}$ are normal projective varieties,
$\nu$ is a finite surjective morphism, $g$ is a morphism, and $f$ is a rational map.
Then $f$ is a morphism.
\end{lem}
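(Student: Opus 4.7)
The plan is to analyze the graph closure of $f$ and reduce to Zariski's main theorem. Let $\Gamma_f \subset X \times Y$ denote the Zariski closure of the graph of $f$ over its domain of definition, equipped with the reduced structure, and let $p_1 \colon \Gamma_f \to X$ and $p_2 \colon \Gamma_f \to Y$ be the two projections. Then $\Gamma_f$ is an integral variety, $p_1$ is proper (since $Y$ is projective) and birational, and $f = p_2 \circ p_1^{-1}$. Thus it suffices to show that $p_1$ is an isomorphism.

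First I would verify that the morphism $(\nu, g) \colon \widetilde{X} \to X \times Y$ factors through $\Gamma_f$. Let $U \subset X$ be the domain of definition of $f$. On the open dense subset $\nu^{-1}(U) \subset \widetilde{X}$ we have $g = f \circ \nu$, so $(\nu,g)(\nu^{-1}(U)) \subset \Gamma_f \cap p_1^{-1}(U)$. Since $\widetilde{X}$ is projective and $\Gamma_f$ is closed in $X \times Y$, the entire image of $(\nu, g)$ lies in $\Gamma_f$, giving a morphism $h \colon \widetilde{X} \to \Gamma_f$ with $p_1 \circ h = \nu$. Next I would check that $h$ is surjective: its image is closed, and it contains $h(\nu^{-1}(U))$, which maps isomorphically onto $\nu(U) \cap U' = U'$ for some dense open $U' \subset U$ via the restriction of $p_1$ (which is an isomorphism over $U$), and this is dense in $\Gamma_f$.

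The key step is to combine these observations to deduce that $p_1$ is quasi-finite. For each $x \in X$, surjectivity of $h$ implies $p_1^{-1}(x) \subset h(\nu^{-1}(x))$, and the right-hand side is finite because $\nu$ is finite. Thus $p_1$ is proper, birational, quasi-finite, and its target $X$ is normal. By Zariski's main theorem, $p_1$ is an isomorphism, and hence $f = p_2 \circ p_1^{-1}$ is a morphism.

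I do not anticipate a genuine obstacle here: every step is a standard manipulation with graph closures and finite maps, and the heart of the argument is just that the finiteness of $\nu$ forces the fibers of $p_1$ to be finite, after which the normal Zariski main theorem finishes the job. The only point requiring a little care is confirming surjectivity of $h \colon \widetilde{X} \to \Gamma_f$, which uses properness of $\widetilde{X}$ together with the fact that $\Gamma_f \cap p_1^{-1}(U)$ is dense in $\Gamma_f$.
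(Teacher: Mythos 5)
Your proof is correct and takes essentially the same approach as the paper: both pass to the graph closure $\Gamma_f$, observe that $\widetilde{X}$ (identified with the graph $\Gamma_g$) maps onto $\Gamma_f$ compatibly with $\nu$, deduce that the first projection $\Gamma_f \to X$ is finite and birational, and conclude by Zariski's main theorem. You simply unpack the surjectivity and quasi-finiteness steps that the paper states tersely via the factorization $\gamma \circ \beta = \nu \circ \alpha$ with $\beta$ surjective.
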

\begin{proof}
Let $ \Gamma_{f}$ and $ \Gamma_{g}$ be the graph of $f$ and $g$.
Then we have the following commutative diagram:
\[
\xymatrix{
\widetilde{X} \ar[d]_{\nu} & \widetilde{X} \times Y \ar[l] \ar[d]_{\nu \times \id} & \Gamma_{g} \ar[l] \ar[d]^{ \beta} \ar@/_18pt/[ll]_{ \alpha} \\
X  & X \times Y \ar[l] & \Gamma_{f} \ar[l] \ar@/^18pt/[ll]^{ \gamma}.
}
\]
Note that $ \alpha$ is an isomorphism, $\nu $ is a finite morphism, and $ \beta$ is a surjective morphism.
This implies $ \gamma$ is birational and finite, that is an isomorphism.
\end{proof}

\section{Proof of the main theorem}\label{sec:prf}

The following proposition, which is a key step to prove the main theorem, is taken from \cite{mz} with a slight change.

\begin{prop}[{cf. \cite[Proposition 9.2]{mz}}]\label{3alt}
Let $X$ be a $\Q$-factorial klt normal projective variety over $\QQ$ with $\Pic(X)_{\Q}=N^{1}(X)_{\Q}$ admitting an int-amplified endomorphism.
Consider the following commutative diagram:
\[
\xymatrix{
X \ar[r]^{f} \ar[d]_{\pi} & X \ar[d]^{\pi} \\
Y \ar[r]_{g} & Y
}
\]
where $\pi$ is a $K_{X}$-negative extremal ray contraction with $\dim Y< \dim X$.
Then one of the following holds:

\begin{enumerate}
\item\label{case:kscholds} KSC for $f$ holds.
\item\label{case:k=0} $ \delta_{f}> \delta_{g}$, $\kappa(-K_{X})=0$, and $\dim Y >0$. 
Moreover, take an effective $\Q$-divisor $D \sim_{\Q} -K_{X}$. Then $\Supp(D)$ is irreducible.
\item\label{case:mmp} There exists an effective $\Q$-divisor $ \Delta$ such that $(X, \Delta)$ is klt, $K_{X}+ \Delta$ is not pseudo-effective, and
there exits a sequence of $(K_{X}+ \Delta)$-MMP:
\[
\xymatrix{
X = X_{1} \ar@{-->}[r] & \cdots  \ar@{-->}[r] & X_{r} \ar[d]^{\pi'} \\
              & & Y'
}
\]
where the horizontal dotted arrows are birational maps and $\pi'$ is a $(K_{X_{r}}+ \Delta_{r})$-negative extremal ray contraction of fiber type
where $ \Delta_{r}$ is the strict transform of $ \Delta$,
satisfying either $\rho(X)>\rho(X_{r})$ or $ \delta_{f^{n}|_{X_{r}}} = \delta_{f^{n}|_{Y'}}$.
Here $n>0$ is any positive integer such that the above sequence of MMP is $f^{n}$-equivariant and
$f^{n}|_{X_{r}}$, $f^{n}|_{Y'}$ are the induced endomorphisms.
Note that $r$ could be $1$. 
\end{enumerate}
\end{prop}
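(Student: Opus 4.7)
The plan is a case analysis driven by the Mori fiber space structure of $\pi$. Since $\pi$ is the contraction of a single extremal ray, $\rho(X/Y)=1$, so pullback gives a short exact sequence
\[
0 \to \pi^*N^1(Y)_\Q \to N^1(X)_\Q \to \Q \to 0.
\]
Equivariance forces $f^*$ to preserve $\pi^*N^1(Y)_\Q$ and act there as $g^*$; on the one-dimensional quotient $f^*$ is multiplication by some positive rational number $d$, so $\delta_f = \max(\delta_g,d)$. Since $f$ is int-amplified, $\delta_f>1$.

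If $\delta_f=\delta_g$, I place us in case (3) using the trivial MMP ($\Delta=0$, $r=1$, $X_1=X$, $\pi'=\pi$, $Y'=Y$): $K_X$ is not pseudoeffective thanks to the $K_X$-negative contraction $\pi$, and the dynamical-degree equality $\delta_{f^n|_{X_1}}=\delta_{g^n|_{Y'}}$ is trivial. If $\delta_f>\delta_g$ but $\dim Y=0$, then $\rho(X)=1$ and $-K_X$ is ample; $-K_X$ is automatically an $f^*$-eigenvector with eigenvalue $d>1$, and $\kappa(-K_X)=\dim X>0$, so Proposition \ref{canht-posiitaka} immediately gives case (1).

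In the remaining situation $\delta_f>\delta_g$ and $\dim Y>0$, the invertibility of $d\cdot\mathrm{id}-g^*$ on $N^1(Y)_\Q$ produces a unique class $[D]\in N^1(X)_\Q$ with $f^*[D]=d[D]$ and nonzero image in the quotient. Using $\Pic(X)_\Q=N^1(X)_\Q$ I lift this class to a $\Q$-Cartier divisor $D$ satisfying $f^*D\sim_\Q \delta_f D$, normalized so that $D\sim_\Q -K_X+\pi^*H$ for some $H\in N^1(Y)_\Q$. If $\kappa(D)>0$, Proposition \ref{canht-posiitaka} again yields case (1). Otherwise $\kappa(D)=0$, and I split the analysis on the effective representatives of $-K_X$ (which exist by Lemma \ref{lem:intamp}(4) combined with $\Pic_\Q=N^1_\Q$).

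If $\kappa(-K_X)=0$ and every effective $E\sim_\Q -K_X$ is supported on a single prime divisor, we are in case (2). If instead some such $E$ has reducible support, or $\kappa(-K_X)>0$ (so effective representatives genuinely move), I would choose $\Delta$ to be a small positive multiple of an appropriate component of $E$ (or of a member of $|m(-K_X)|$) so that $(X,\Delta)$ is klt and $K_X+\Delta\sim_\Q -(1-\epsilon)E$ is anti-effective, hence not pseudoeffective. Running $f^n$-equivariant $(K_X+\Delta)$-MMP via Theorems \ref{equiv-thm} and \ref{equiMMP} terminates in a fiber-type contraction $\pi'\colon X_r\to Y'$, and the additional boundary is designed to force at least one divisorial contraction along the way, yielding $\rho(X)>\rho(X_r)$ as required for case (3). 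The main obstacle is verifying this last step rigorously: one must ensure, using the $f^n$-equivariance and the rigidity of effective $-K_X$-representatives that accompanies $\kappa(D)=0$, that the MMP really does perform a divisorial contraction rather than consisting entirely of flips terminating in a Mori fibration of the same Picard number as $X$.
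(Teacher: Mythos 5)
Your reductions at the start match the paper: splitting off $\delta_f=\delta_g$ (trivial case (3)) and $\dim Y=0$ (case (1)) is exactly what the paper does, and the construction of an eigendivisor $D$ with $f^*D\sim_\Q\delta_f D$ having nonzero image in $N^1(X)_\Q/\pi^*N^1(Y)_\Q$ is the right object. But the core trichotomy diverges from the paper in two places where the proposal has real gaps.

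First, the dichotomy you run is on $\kappa(D)$, but you never establish $\kappa(D)\geq 0$ — a numerical eigenvector need not be $\Q$-linearly effective — so the subcase "otherwise $\kappa(D)=0$" is not the complement of "$\kappa(D)>0$". The paper instead forms $B=D+aK_X$ with $B\cdot R=0$ and splits on whether $B$ is pseudo-effective, a dichotomy that is always well-defined. The payoff of that choice is decisive: when $B$ is pseudo-effective, $D=B+a(-K_X)$ exhibits $D$ as a sum of two pseudo-effective classes, and since $D$ spans an extremal ray of $\overline{\Eff}(X)$ (this is \cite[Lemma 9.1]{mz}, using that $D$ is also a \emph{nef} eigendivisor produced via Perron--Frobenius, not merely a numerical class), both $B$ and $-K_X$ are forced to be proportional to $D$; as $B\cdot R=0$ and $D\cdot R>0$ this kills $B$, so $D$ is a multiple of $-K_X$. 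Only then does one split on $\kappa(-K_X)$, which is indeed nonnegative because $-K_X$ is $\Q$-linearly equivalent to an effective divisor by Lemma \ref{lem:intamp}(4) and $\Pic(X)_\Q=N^1(X)_\Q$. Your analysis of $\kappa(D)$ without first identifying $D$ with $-K_X$ doesn't connect to the $\kappa(-K_X)=0$ hypothesis in case (2), and your claim that one can directly jump to looking at "effective representatives of $-K_X$" when $\kappa(D)=0$ isn't justified.

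Second, and this is the gap you yourself flag, the proposal in the final MMP branch tries to force $\rho(X)>\rho(X_r)$ by a clever choice of $\Delta$. This cannot work in general: a $(K_X+\Delta)$-MMP may consist entirely of flips before reaching a Mori fibration, and there is no lever in the choice of boundary that guarantees a divisorial contraction happens. The proposition's case (3) is stated as a disjunction for exactly this reason, and the paper's proof handles the $\rho(X)=\rho(X_r)$ branch with a separate argument: when $\varphi$ is a composite of flips, one compares the codimension-one subspaces $\pi^*N^1(Y)_\R$ and $\varphi^*({\pi'}^*N^1(Y')_\R)$ inside $N^1(X)_\R$, shows they differ (because $\varphi^*(K_{X'}+(1/a)\varphi_*D)=(1/a)B$ lies in the first but $K_{X'}+(1/a)\varphi_*D$ has negative intersection with the $\pi'$-fibers), deduces that $N^1(Y')_\R$ surjects onto $N^1(X)_\R/\pi^*N^1(Y)_\R$, and concludes that $(f^n|_{Y'})^*$ has eigenvalue $\delta_{f^n}$, i.e. the dynamical degrees agree. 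Without something equivalent to this linear-algebra argument, the proof is incomplete.
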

\begin{proof}
If $ \delta_{f}= \delta_{g}$, then (\ref{case:mmp}) holds with $ \Delta=0$, $r=1$, and $\pi'=\pi$.
Suppose $\delta_{f}> \delta_{g}$.
If $\dim Y=0$, then $\rho(X)=1$ and (\ref{case:kscholds}) holds (cf. \cite{ks3}).  
Suppose $\dim Y \geq 1$.
Since $\dim N^{1}(X)_{\Q}=\dim N^{1}(Y)_{\Q}+1$ and $\delta_{f}> \delta_{g}$, 
$ \delta_{f}$ is an integer lager than $1$ and the generalized eigenspace of $f^{*}|_{N^{1}(X)}$ associated with $ \delta_{f}$ is dimension one.
Combined with Perron-Frobenius type theorem, this implies there exists a nef integral divisor $D \not\equiv 0$ such that $f^{*}D \sim_{\Q} \delta_{f}D$. 
Since $ \delta_{f}> \delta_{g}$, $D$ is $\pi$-ample.
By \cite[Lemma 9.1]{mz}, the ray $\R_{\geq0}D$ in $N^{1}(X)_{\R}$ is an extremal ray in $\Nef(X)$ and $ \overline{\Eff}(X)$. 
Let $R\subset \overline{NE}(X)$ be the ray contracted by $\pi$.
Take a positive rational number $a \in \Q_{>0}$ such that $B:=D+aK_{X}$ satisfies $B\cdot R=0$.

(i) Suppose $B$ is pseudo-effective.
Since $D$ generates an extremal ray of $ \overline{\Eff}(X)$, we have $\R_{\geq0}D=\R_{\geq0}(-K_{X})$.
Thus we get $f^{*}(-K_{X}) \sim_{\Q} \delta_{f}(-K_{X})$.
If $ \kappa(-K_{X})>0$, then by Proposition \ref{canht-posiitaka}, KSC holds for $f$ and (\ref{case:kscholds}) holds.
Suppose $ \kappa(-K_{X})=0$.
Since $f^{*}(-K_{X}) \sim_{\Q} \delta_{f}(-K_{X})$, $-K_{X}$ generates an extremal ray of $ \overline{\Eff}(X)$,
and $\Pic(X)_{\Q} = N^{1}(X)_{\Q}$, any effective $\Q$-divisor $D$ that is $\Q$-linearly equivalent to $-K_{X}$ has irreducible support.
Thus (\ref{case:k=0}) holds.

(ii) Suppose $B$ is not pseudo-effective.
Take an sufficiently small effective ample $\Q$-divisor $E$ on $X$ so that $(1/a)B+E$ is not pseudo-effective.
Let $A:=E+(1/a)D$. Note that this $\Q$-divisor is ample and
$K_{X}+A=E+(1/a)B$ is not pseudo-effective.
Take an effective $\Q$-divisor $ \Delta$ on $X$ such that $ \Delta\sim_{\Q} A$ and $(X, \Delta)$ is klt.
By \cite{bchm}, we can run $(K_{X}+ \Delta)$-MMP and get:
\[
\xymatrix{
X \ar@{-->}[r]^{\varphi} & X' \ar[d]^{\pi'} \\
 & Y'
}
\]
where $\varphi$ is the composite of $(K_{X}+ \Delta)$-flips and $(K_{X}+ \Delta)$-divisorial contractions, and
$\pi'$ is a fiber type contraction of $(K_{X'}+ \varphi_{*}\Delta)$-negative extremal ray. 
If $\rho(X)>\rho(X')$, then (\ref{case:mmp}) holds.
Suppose $\rho(X)=\rho(X')$.
Then $\varphi$ is the composite of flips and in particular it is isomorphic in codimension one.
Take any $n>0$ such that the above diagram is $f^{n}$-equivariant.
Consider the following equivariant diagram of linear maps:
\[
\xymatrix{
(f^{n})^{*} \acts N^{1}(X)_{\R} & N^{1}(X')_{\R} \ar[l]_{\varphi^{*}} \racts (f^{n}|_{X'})^{*}\\
(g^{n})^{*} \acts N^{1}(Y)_{\R} \ar@<-2.5ex>[u]^{\pi^{*}} & N^{1}(Y')_{\R} \racts (f^{n}|_{Y'})^{*} \ar@<5.8ex>[u]_{{\pi'}^{*}} 
}
\]
where $\varphi^{*}$ is the strict transform and it is an isomorphism.
Let $C'$ be a curve on $X'$ that is contracted by $\pi'$.
Then
\[
((K_{X'}+(1/a)\varphi_{*}D)\cdot C')=((K_{X'}+\varphi_{*} \Delta)\cdot C')-(\varphi_{*}E\cdot C')<0.
\]
Note that $(\varphi_{*}E\cdot C')\geq0$ since $\varphi_{*}E$ is effective and $\pi'$ is fiber type contraction.
Therefore, $K_{X'}+(1/a)\varphi_{*}D \notin {\pi'}^{*}(N^{1}(Y')_{\R})$.
On the other hand, $\varphi^{*}(K_{X'}+(1/a)\varphi_{*}D)=K_{X}+(1/a)D=(1/a)B \in \pi^{*}(N^{1}(Y)_{\R})$.
This implies $\pi^{*}(N^{1}(Y)_{\R})$ and $\varphi^{*}({\pi'}^{*}N^{1}(Y')_{\R})$ are different codimension one 
linear subspace of $N^{1}(X)_{\R}$.
Therefore, the induced map $N^{1}(Y')_{\R} \longrightarrow N^{1}(X)_{\R}/\pi^{*}(N^{1}(Y)_{\R})$ is surjective.
Since $(f^{n})^{*}$ acts on $N^{1}(X)_{\R}/\pi^{*}(N^{1}(Y)_{\R})$ as multiplication by $ \delta_{f^{n}}$,
$(f^{n}|_{Y'})^{*}$ has eigenvalue $ \delta_{f^{n}}$.
This implies $ \delta_{f^{n}|_{Y'}} = \delta_{f^{n}}= \delta_{f^{n}|_{X'}}$.
Thus, (\ref{case:mmp}) holds.
\end{proof}

The following lemmas are taken from \cite{mz}.

\begin{lem}[{cf. \cite[Theorem 6.2]{mz}}]\label{-Ktotinv}
Let $X$ be a $\Q$-factorial normal projective variety with $\Pic(X)_{\Q}=N^{1}(X)_{\Q}$.
Suppose $\kappa(-K_{X})=0$ and $-K_{X} \sim_{\Q} D\geq 0$.
Then for any surjective endomorphism $f \colon X \longrightarrow X$, we have
$f^{-1}(\Supp(D))=\Supp(D)$.
\end{lem}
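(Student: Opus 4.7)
The plan is to use the uniqueness of the effective $\Q$-divisor representing $[-K_X]$, together with the ramification formula for $f$, to establish $f^*D = D + R_f$ as $\Q$-divisors; the support statement will then follow. First I would observe that $D$ is the unique effective $\Q$-divisor with $D \sim_\Q -K_X$: since $\Pic(X)_\Q = N^1(X)_\Q$, numerical and $\Q$-linear equivalence agree on $\Q$-Cartier divisors, so any other such $D'$ is $\Q$-linearly equivalent to $D$. Choosing $m$ sufficiently divisible so that $mK_X$ is Cartier and both $mD, mD'$ are integral, we have $mD, mD' \in |-mK_X|$; since $\kappa(-K_X) = 0$ forces $h^0(-mK_X) \leq 1$ for every $m$ (two linearly independent sections would make the associated rational map have positive-dimensional image), we conclude $mD = mD'$ and hence $D = D'$.

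Next, the ramification formula $f^*K_X \sim_\Q K_X - R_f$ gives
\[
f^*D \sim_\Q f^*(-K_X) \sim_\Q -K_X + R_f \sim_\Q D + R_f,
\]
so both $f^*D$ and $D + R_f$ are effective $\Q$-divisors in the class $[f^*(-K_X)]$. Because Iitaka dimension is preserved under finite pullback, $\kappa(f^*(-K_X)) = \kappa(-K_X) = 0$, and applying the uniqueness argument of the previous paragraph to this new class yields $f^*D = D + R_f$ as $\Q$-divisors. Taking supports, $\Supp(f^*D) = f^{-1}(\Supp D)$ (as $f$ is finite surjective and $D$ effective) and $\Supp(D + R_f) = \Supp D \cup \Supp R_f$, so $f^{-1}(\Supp D) = \Supp D \cup \Supp R_f$.

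The main obstacle is then the remaining inclusion $\Supp R_f \subseteq \Supp D$, which would close the argument. Comparing multiplicities prime-by-prime in $f^*D = D + R_f$ immediately gives the forward inclusion $f(\Supp D) \subseteq \Supp D$ (if some $E \in \Supp D$ had $f(E) \notin \Supp D$, then $\text{mult}_E(f^*D) = 0$ while $\text{mult}_E(D) > 0$, a contradiction), and constrains any hypothetical prime $E \in \Supp R_f \setminus \Supp D$ to satisfy $f(E) \in \Supp D$ with coefficient $(e_E - 1)/e_E$ in $D$, where $e_E$ is the ramification index of $f$ along $E$. Ruling out such configurations should require an iterative argument via $(f^n)^*D = D + R_{f^n}$ with $R_{f^n} = \sum_{i=0}^{n-1}(f^i)^*R_f$, or an input on the structure of totally invariant divisors in the finite-dimensional space $N^1(X)_\R$.
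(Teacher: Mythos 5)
Your first two paragraphs are correct and in fact slightly sharper than what the paper records: from $\kappa(-K_X)=0$ you correctly deduce that $D$ is the \emph{unique} effective $\Q$-divisor $\Q$-linearly equivalent to $-K_X$, and then from $f^*(-K_X)\sim_\Q -K_X+R_f$ you upgrade the paper's $f^*D\sim_\Q D+R_f$ to the actual equality $f^*D=D+R_f$ of $\Q$-divisors. Taking supports then gives $f^{-1}(\Supp D)=\Supp D\cup\Supp R_f$, so the entire content of the lemma is indeed the inclusion $\Supp R_f\subseteq\Supp D$. Up to here your argument matches the paper in spirit (the paper invokes the ramification formula and feeds the resulting relation $f^*D\sim_\Q D+B$, $B=R_f$ effective, into Lemma~\ref{totinv}).

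The gap is that you never prove $\Supp R_f\subseteq\Supp D$; your last paragraph only derives the necessary condition that any hypothetical $E\in\Supp R_f\setminus\Supp D$ maps into $\Supp D$, and then gestures at ``an iterative argument via $(f^n)^*D=D+R_{f^n}$'' or ``the structure of totally invariant divisors in $N^1(X)_\R$.'' Neither of these is carried out, and the one you sketch most concretely is pointed in the wrong direction. The chain you propose, $\Supp D\subseteq f^{-1}(\Supp D)\subseteq f^{-2}(\Supp D)\subseteq\cdots$, is \emph{increasing}, and in a Noetherian space there is no a priori reason for an increasing chain of closed subsets to stabilize; your observation $f(\Supp D)\subseteq\Supp D$ controls forward images, not backward preimages. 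What the paper does instead (Lemma~\ref{totinv}) is push forward: applying $f_*$ to $f^*D\sim_\Q D+R_f$ and using the projection formula gives $(\deg f)D\sim_\Q f_*D+f_*R_f$ with both summands effective, so by the same $\kappa=0$ uniqueness you already invoked one actually has $(\deg f)D=f_*D+f_*R_f$ and hence $\Supp f_*D\subseteq\Supp D$. Iterating yields the \emph{decreasing} chain $\Supp D\supseteq\Supp f_*D\supseteq\Supp f_*^2D\supseteq\cdots$, which stabilizes for free since these are all unions of components of $\Supp D$; the paper's Claim then runs the chain backwards using $f^*f_*=(\deg f)\id$ on $N^1(X)_\Q=\Pic(X)_\Q$ to conclude $\Supp f_*D=\Supp D$, from which $f^{-1}(\Supp D)=\Supp f^*D=\Supp f^*f_*D=\Supp D$. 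Note that this last step is exactly where the hypothesis $\Pic(X)_\Q=N^1(X)_\Q$ is used (to deduce $f^*f_*=(\deg f)\id$ on $\Pic_\Q$ from finite-dimensionality); your write-up invokes that hypothesis only in a place where it is not needed (you already have $D'\sim_\Q D$ from the statement) and never in a place where it is, which is a sign that the argument as written cannot be complete.
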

\begin{proof}
This follows from the ramification formula and Lemma \ref{totinv} below.
\end{proof}

\begin{lem}[{cf. \cite[Proposition 6.1]{mz}}]\label{totinv}
Let $X$ be a $\Q$-factorial normal projective variety with $\Pic(X)_{\Q}=N^{1}(X)_{\Q}$.
Let $f \colon X \longrightarrow X$ be a surjective endomorphism and $D$ an effective $\Q$-divisor on $X$ such that:
\begin{enumerate}
\item $\kappa(D)=0$;
\item\label{ass2} $f^{*}D \sim_{\Q} D+B$ for some effective $\Q$-divisor $B$ on $X$.
\end{enumerate}
Then $f^{-1}(\Supp D)= \Supp D$.
\end{lem}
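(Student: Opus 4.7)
The plan is to upgrade the $\Q$-linear equivalence $f^{*}D \sim_{\Q} D+B$ to an honest equality of $\Q$-Weil divisors, and then to read off the support statement.  First I would use that $f$ is finite surjective, so that pullback by $f$ preserves Iitaka dimension: $\kappa(f^{*}D)=\kappa(D)=0$.  Hence for $m$ sufficiently divisible, $h^{0}(X,mf^{*}D)=1$, which is precisely the statement that $f^{*}D$ is the unique effective $\Q$-divisor in its $\Q$-linear class (two effective divisors $D'\sim_{\Q}D''$ in the class differ by the divisor of a rational function $\phi$, and together with the constant $1$ this $\phi$ would span a $2$-dimensional subspace of $H^{0}(X,mf^{*}D)$, contradiction unless $\phi$ is a constant).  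Since $D+B$ is also effective and sits in the class of $f^{*}D$, uniqueness gives
\[
f^{*}D \;=\; D+B
\]
as $\Q$-Weil divisors on $X$.

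From this honest equality I would immediately read off $f^{-1}(\Supp D)=\Supp(f^{*}D)=\Supp D\cup \Supp B$.  Moreover each prime component $D_{i}$ of $D$ has positive coefficient in $f^{*}D=D+B$, so $D_{i}$ appears as a component of $f^{*}D_{\sigma(i)}$ for some prime $D_{\sigma(i)}\subset \Supp D$, and therefore $f(D_{i})\subseteq \Supp D$.  Thus $\Supp D\subseteq f^{-1}(\Supp D)$ is automatic, and the lemma reduces to the reverse inclusion $\Supp B\subseteq \Supp D$.

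For the containment $\Supp B\subseteq \Supp D$ is where the hypothesis $\Pic(X)_{\Q}=N^{1}(X)_{\Q}$ must enter.  Iterating the uniqueness step of the first paragraph gives the divisor equality
\[
(f^{n})^{*}D \;=\; D+\sum_{i=0}^{n-1}(f^{i})^{*}B,
\]
and all classes $(f^{*})^{n}[D]$ lie inside the finite-dimensional vector space $N^{1}(X)_{\Q}$.  The $f^{*}$-invariant subspace generated by $[D]$ is therefore finite-dimensional, and the minimal polynomial of $f^{*}$ acting on it yields a nontrivial $\Q$-linear relation $\sum_{i}p_{i}(f^{*})^{i}[D]=0$ in $N^{1}(X)_{\Q}$.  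By the hypothesis $\Pic(X)_{\Q}=N^{1}(X)_{\Q}$ this lifts to $\sum_{i}p_{i}(f^{i})^{*}D\sim_{\Q}0$, i.e.\ to a rational function whose principal divisor is supported on $\bigcup_{i}f^{-i}(\Supp D)$.  Splitting into positive and negative parts gives two effective $\Q$-divisors in one class; combined with the formula $\ord_{E}((f^{n})^{*}D)=a_{f^{n}(E)}\cdot\prod_{j=0}^{n-1}e_{f^{j}(E)}$ and the rigidity of divisors with $\kappa=0$, one forces any hypothetical prime $E\subseteq\Supp B$ with $E\not\subseteq\Supp D$ to yield a contradiction.

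The hard part will be the last step: converting the numerical relation into a contradiction.  The obstacle is that the common class of the positive and negative parts of $\sum p_{i}(f^{i})^{*}D$ need not itself have $\kappa=0$, so one cannot directly re-invoke uniqueness.  I expect to handle this by exploiting the telescoping structure $B_{n}=\sum_{i<n}(f^{i})^{*}B$ together with the sandwich $D\le (f^{n})^{*}D \le \text{(rigid class)}$ to cut the argument down to a purely combinatorial claim about the orbit $\{f^{n}(E)\}_{n\ge 0}$ of a hypothetical prime divisor $E\subseteq \Supp B\setminus\Supp D$ inside the finite set of components of $\Supp D$.
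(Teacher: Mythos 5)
Your first reduction is correct and clean: since $f$ is finite surjective, $\kappa(f^{*}D)=\kappa(D)=0$, so the effective representative of the class $[f^{*}D]$ is unique and the $\Q$-linear equivalence upgrades to an honest identity $f^{*}D=D+B$ of $\Q$-Weil divisors, whence $f^{-1}(\Supp D)=\Supp D\cup\Supp B\supseteq\Supp D$ and the lemma reduces to $\Supp B\subseteq\Supp D$. Up to this point you agree with the spirit of the paper's argument (the paper gets the same rigidity via pushforward rather than pullback).

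However, the crucial second half is a genuine gap, and you flag it yourself: the strategy of producing a $\Q$-linear relation $\sum_{i}p_{i}(f^{i})^{*}D\sim_{\Q}0$ from the minimal polynomial of $f^{*}$, lifting it to $\Pic(X)_{\Q}$, and then splitting into positive and negative parts does not deliver a contradiction, precisely because the common class of those two parts has no reason to have Iitaka dimension $0$, so uniqueness of the effective representative is no longer available. Your proposal ends with "I expect to handle this by..." rather than an argument. The paper avoids this entirely by working with $f_{*}$ instead of $f^{*}$. Pushing forward $f^{*}D\sim_{\Q}D+B$ gives $(\deg f)\,D\sim_{\Q}f_{*}D+f_{*}B$, and since $\kappa(D)=0$ one gets $\kappa(f_{*}D)=0$ and $\Supp D\supseteq\Supp f_{*}D$; iterating produces a decreasing chain of supports, which stabilizes. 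The hypothesis $\Pic(X)_{\Q}=N^{1}(X)_{\Q}$ is then used to promote the numerical projection formula to $f^{*}f_{*}=(\deg f)\,\mathrm{id}$ on $\Pic(X)_{\Q}$ (finite-dimensionality forces $f^{*}$ to be bijective and allows cancellation in $f^{*}f_{*}f^{*}=(\deg f)f^{*}$); this, combined with rigidity, lets one descend the stabilization back to the start and obtain $\Supp D=\Supp f_{*}D$, hence $f^{-1}(\Supp D)=\Supp f^{*}f_{*}D=\Supp D$. So the paper's use of the hypothesis $\Pic(X)_{\Q}=N^{1}(X)_{\Q}$ and the descent step are the missing ingredients; your minimal-polynomial route would need a substitute for rigidity that you have not supplied.
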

\begin{proof}
Pushing forward the equation in (\ref{ass2}), we have $(\deg f) D \sim_{\Q} f_{*}D+f_{*}B$.
Since $ \kappa(D)=0$, we have $ \kappa(f_{*}D)=0$ and $\Supp D = \Supp f_{*}D \cup \Supp f_{*}B \supset \Supp f_{*}D $.
Applying $f_{*}$ again, we get
\[
(\deg f) f_{*}D \sim_{\Q} f^{2}_{*}D+f^{2}_{*}B.
\] 
This implies $ \kappa(f^{2}_{*}D)=0$ and $\Supp f_{*}D \supset \Supp f^{2}_{*}D$.
Repeating this, we get
\[
\Supp D \supset \Supp f_{*}D \supset \Supp f^{2}_{*}D \supset \cdots.
\]
There exists a non-negative integer $n$ such that $\Supp f^{n}_{*}D=\Supp f^{n+1}_{*}D$.
\begin{claim}
If $\Supp f^{n}_{*}D=\Supp f^{n+1}_{*}D$, then $\Supp f^{n-1}_{*}D = \Supp f^{n}_{*}D$.
\end{claim}
\begin{claimproof}
Since $\Supp f^{n}_{*}D=\Supp f^{n+1}_{*}D$, we have 
\[
\Supp f^{*}(f^{n}_{*}D) = \Supp f^{*}(f^{n+1}_{*}D) = \Supp f^{*}f_{*}(f^{n}_{*}D).
\]
By projection formula, we have $f_{*}f^{*}=\deg f \id$ on $N^{1}(X)_{\Q}=\Pic X_{\Q}$.
Since $N^{1}(X)_{\Q}$ is finite dimensional, we have $f^{*}f_{*}=\deg f \id $ on $N^{1}(X)_{\Q}=\Pic X_{\Q}$.
Thus, $f^{*}f_{*}(f^{n}_{*}D) \sim_{\Q} \deg f(f^{n}_{*}D)$.
Moreover, we have 
\[
(f^{n})^{*}f^{n}_{*}D\sim_{\Q} f^{n}_{*}(f^{n})^{*}D = \deg f^{n} D.
\]
This shows $ \kappa(f^{n}_{*}D)=0$.
Therefore, we get $\Supp f^{*}(f^{n}_{*}D)=\Supp f^{n}_{*}D$.
Since $\kappa(f^{n-1}_{*}D)=0$ and $ f^{*}(f^{n}_{*}D)\sim_{\Q} (\deg f) f^{n-1}_{*}D$, we have 
\[
\Supp f^{*}(f^{n}_{*}D) = \Supp f^{n-1}_{*}D
\] 
and this implies $\Supp f^{n-1}_{*}D = \Supp f^{n}_{*}D$.
\end{claimproof}

By this claim, we get $\Supp D=\Supp f_{*}D$.
By the same argument, we get $f^{-1}(\Supp D)=\Supp f^{*}D=\Supp f^{*}f_{*}D = \Supp D$.
\end{proof}

\begin{rmk}\label{ramtot}
In the setting in Lemma \ref{-Ktotinv},
if $\Supp(D)$ is irreducible and $f \colon X \longrightarrow X$ is int-amplified, then
we have $f^{*}D = qD$ for some integer $q>1$.
Moreover, $D$ is a prime divisor and $R_{f}=(q-1)D \sim_{\Q} (q-1)(-K_{X})$.
\end{rmk}

Now we prove the main theorem of this paper.

\begin{thm}
Let $X$ be a smooth projective rationally connected variety over $\QQ$ admitting an int-amplified endomorphism.
Then Kawaguchi-Silverman conjecture holds for all surjective endomorphisms on $X$.
\end{thm}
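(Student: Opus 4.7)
The plan is to run the Meng-Zhang equivariant MMP and induct on the pair $(\dim X, \rho(X))$ ordered lexicographically, working within the class of $\Q$-factorial klt rationally connected projective varieties admitting an int-amplified endomorphism. After replacing $f$ by an iterate, Theorem \ref{equiMMP} produces an equivariant sequence $X = X_0 \dashrightarrow X_1 \dashrightarrow \cdots \dashrightarrow X_r$ of divisorial contractions, flips, and Fano contractions ending at a $Q$-abelian variety $X_r$. Rational connectedness is preserved by each step, so $X_r$ is both rationally connected and $Q$-abelian, hence a point; in particular at least one step is of fiber type. I let $\pi \colon X_k \longrightarrow X_{k+1}$ be the first such step. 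Since all earlier steps are birational, Remarks \ref{birinvksc} and \ref{baseksc} let KSC descend backward through them, so it suffices to establish KSC for $g_k$.

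At $\pi$ I apply Proposition \ref{3alt}. Case (1) gives KSC for $g_k$ directly. In Case (3), the secondary MMP from $X_k$ produces a Mori fiber space $\pi' \colon X' \longrightarrow Y'$ satisfying either $\rho(X') < \rho(X_k) \leq \rho(X)$ or $\delta_{f^n|_{X'}} = \delta_{f^n|_{Y'}}$. In the first sub-case the induction hypothesis applied to $X'$ (same dimension, strictly smaller Picard number) yields KSC for the induced endomorphism, which then propagates back to $g_k$. In the second sub-case, $Y'$ is klt by Lemma \ref{canonical bundle formula lem} and rationally connected with an induced int-amplified endomorphism (Lemma \ref{lem:intamp}); after an equivariant small $\Q$-factorialization, the induction hypothesis on smaller dimension gives KSC on $Y'$, and the equality $\delta_{f^n|_{X'}} = \delta_{f^n|_{Y'}}$ together with Remark \ref{baseksc} transports this to KSC for $g_k$.

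The main obstacle is ruling out Case (2) of Proposition \ref{3alt}. Suppose it holds. Then $\kappa(-K_{X_k}) = 0$ and $\Supp D$ is irreducible for $D \sim_\Q -K_{X_k}$, so by Remark \ref{ramtot} the ramification divisor $R_{g_k} = (q-1)D$ is a single prime divisor. Since $\pi$ is the contraction of a $K$-negative extremal ray of fiber type, $-K_{X_k}$ is $\pi$-ample, so $D$ is $\pi$-horizontal, meaning every irreducible component of $\Supp R_{g_k}$ surjects onto $X_{k+1}$. The hypotheses of Theorem \ref{covlem} are therefore satisfied, producing a quasi-\'etale cover $\widetilde{X}_k \to X_k$ that dominates a positive-dimensional abelian variety; equivalently, $g_k$ satisfies the property $(\ast)$ of Definition \ref{starcond}. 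Applying Lemma \ref{liftlem} iteratively along the divisorial contractions and flips $X = X_0 \dashrightarrow \cdots \dashrightarrow X_k$, I obtain that $f^n$ also satisfies $(\ast)$ on $X$, giving a quasi-\'etale cover $\mu_X \colon \widetilde{X} \longrightarrow X$ together with a surjection $\widetilde{X} \longrightarrow A$ onto a positive-dimensional abelian variety. But $X$ is smooth and rationally connected, hence simply connected, so the quasi-\'etale cover $\mu_X$ is trivial, and $X$ itself would dominate $A$ — impossible, since $h^1(X, \mathcal{O}_X) = 0$ forces every morphism from $X$ to an abelian variety to be constant. This contradiction rules out Case (2) and closes the induction. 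The delicate point is precisely this transport: the covering structure is constructed only at the MMP step where Case (2) might arise, while the obstruction lives at the level of the original smooth variety $X$, so one must propagate $(\ast)$ all the way back via Lemma \ref{liftlem}.
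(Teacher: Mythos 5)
Your proposal follows the same overall strategy as the paper: equivariant MMP down to a Mori fiber space, Proposition~\ref{3alt} at the fiber-type step, and ruling out case~(2) by propagating the property~$(\ast)$ back through the MMP via Theorem~\ref{covlem} and Lemma~\ref{liftlem}. However, there is a genuine gap in how you handle the covering machinery.

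The gap is that you apply Remark~\ref{ramtot}, Theorem~\ref{covlem}, and Lemma~\ref{liftlem} to $g_k = f^n|_{X_k}$, but all three results require the endomorphism to be \emph{int-amplified}, and $f$ is an arbitrary surjective endomorphism that need not be int-amplified (the hypothesis is only that $X$ \emph{admits} an int-amplified endomorphism, not that $f$ itself is one). For instance, without int-amplification, Remark~\ref{ramtot} does not give $q > 1$, so $R_{g_k}$ could be zero and one cannot conclude that the ramification is horizontal; and Theorem~\ref{covlem} and Lemma~\ref{liftlem} are stated only for int-amplified maps. The paper fixes this by introducing a separate int-amplified endomorphism $\Phi$, choosing $n$ so that the MMP is simultaneously $f^n$- and $\Phi^n$-equivariant, and running the entire covering argument (Remark~\ref{ramtot}, Theorem~\ref{covlem}, Lemma~\ref{liftlem}) on $\Phi^n$ rather than on $f^n$. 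Since case~(2) of Proposition~\ref{3alt} is a condition on $X_k$ and $\pi$ (namely $\kappa(-K_{X_k}) = 0$, $\dim X_{k+1} > 0$, and irreducibility of the anticanonical support), the contradiction via~$(\ast)$ can indeed be extracted from $\Phi^n$ rather than $f^n$, but you must explicitly switch to~$\Phi$; as written your argument does not go through.

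A secondary issue concerns your induction framework. You induct on $(\dim X, \rho(X))$ over $\Q$-factorial klt rationally connected varieties, but in ruling out case~(2) you conclude ``$X$ is smooth and rationally connected, hence simply connected,'' which is only valid when $X$ is the \emph{original} smooth variety, not an intermediate klt variety produced by earlier induction steps (klt rationally connected varieties can have nontrivial finite quasi-\'etale fundamental group). The paper avoids this by never restarting the argument: it keeps a single chain of MMP operations from the original smooth $X$ to the current variety and propagates~$(\ast)$ through the entire chain back to the smooth $X$, where purity of the branch locus and simple connectedness apply. Your closing sentence (``propagate~$(\ast)$ all the way back \ldots to the original smooth variety'') shows you see this, but it contradicts the inductive framing, since in a genuine induction the base variable is no longer the original $X$. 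You should either replace the induction by the paper's iterated process with a fixed reference to the smooth $X$, or supply a separate argument that rationally connected klt varieties admitting an int-amplified endomorphism never satisfy~$(\ast)$ (e.g.\ via rational connectedness of quasi-\'etale covers), which the paper does not need because it never leaves the smooth setting when invoking~$(\ast)$.
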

\begin{proof}
See \cite[Definition 4.3]{deb} for the definition of rationally connected.
Note that since $X$ is rationally connected, all varieties $X'$ that appear in a process of MMP starting from $X$ are also rationally connected 
and therefore $N^{1}(X')_{\Q}=\Pic X'_{\Q}$.

Let $f \colon X \longrightarrow X$ be a surjective endomorphism.
Fix an int-amplified endomorphism $ \Phi \colon X \longrightarrow X$.
Note that $K_{X}$ is not pseudo-effective since $X$ is smooth rationally connected.
By \cite{bchm}, we can run $K_{X}$-MMP and get:
\[
\xymatrix{
X = X_{1} \ar@{-->}[r] & \cdots \ar@{-->}[r]& X_{r} \ar[d]^{\pi} \\
 & & Y
}
\]
where $X_{1} \dashrightarrow X_{r}$ is the composite of flips and divisorial contractions and $\pi$ is a fiber type contraction.
(Note that MMP ends up with fiber type contraction because $K_{X}$ is not pseudo-effective.) 
Take $n>0$ so that the above diagram is $f^{n}$ and $\Phi^{n}$-equivariant.
Apply Proposition \ref{3alt} to $f^{n}|_{X_{r}} \acts X_{r} \longrightarrow Y \racts f^{n}|_{Y}$.
If (\ref{case:kscholds}) holds, then KSC for $f$ holds by Remark \ref{birinvksc}.

Suppose (\ref{case:k=0}) holds.
By Lemma \ref{-Ktotinv} and Remark \ref{ramtot}, $R_{\Phi^{n}|_{X_{r}}}$ is irreducible and $\pi$-ample.
In particular, $\pi(R_{\Phi^{n}|_{X_{r}}})=Y$ and we can apply Theorem \ref{covlem} and conclude that 
$\Phi^{n}|_{X_{r}}$ satisfies $(\ast)$.
By Lemma \ref{liftlem}, $\Phi^{n}$ also satisfies $(\ast)$.
This is a contradiction because $X$ is algebraically simply connected and does not dominate an abelian variety of positive dimension.

Suppose (\ref{case:mmp}) holds.
We have the following $(K_{X_{r}}+ \Delta)$-MMP for some effective $\Q$-divisor $ \Delta$ with $(X, \Delta)$ is klt:
\[
\xymatrix{
X_{r} \ar@{-->}[r] & X_{r}' \ar[d]^{\pi'} \\
 & Y'
}
\]
where $X_{r} \dashrightarrow X_{r}'$ is birational and $\pi'$ is a contraction of fiber type.
The whole diagram is again $f^{m}$ and $\Phi^{m}$-equivariant for sufficiently divisible $m$.
By Proposition \ref{3alt}, we have either $\rho(X_{r})>\rho(X_{r}')$ or $ \delta_{f^{m}|_{X_{r}'}}= \delta_{f^{m}|_{Y'}}$.
In the first case, run $K_{X_{r}'}$-MMP. It ends up with Q-abelian variety or fiber type contraction (Theorem \ref{equiMMP}).
In the second case, run $K_{Y'}$-MMP. It ends up with Q-abelian variety or fiber type contraction (Theorem \ref{equiMMP}). 
In both cases, Q-abelian case does not occur because endomorphisms on Q-abelian varieties satisfy $(\ast)$ and we can get a contradiction as above
(In this case, we need to know that the condition $(\ast)$ ascends along fiber type contraction, which is also proved in Lemma \ref{liftlem}).
Write our new fiber type contraction $X^{(1)} \longrightarrow Y^{(1)}$. Note that $\rho(X_{r})>\rho(X^{(1)})$ or $\dim X_{r}> \dim X^{(1)}$.
For sufficiently divisible $m'>0$, the whole diagram is equivariant under $f^{m'}$ and $\Phi^{m'}$.
Then by Remark \ref{iterksc}, \ref{birinvksc}, and \ref{baseksc},  KSC for $f$ follows from KSC for $f^{m'}|_{X^{(1)}}$. 
Apply Proposition \ref{3alt} to $f^{m'}|_{X^{(1)}}$ and argue in the same way.
Repeating this process, we get a sequence $X^{(0)}=X_{r}, X^{(1)}, X^{(2)}, \dots$.
Since the Picard number or the dimension strictly decreases, this process finally terminates.
\end{proof}

\end{document}